\newcommand{\g}{\mathfrak{g}}
\newcommand{\uts}{\underline{\ts}}
\newcommand{\ra}{\rightarrow}
\newcommand{\ve}{\varepsilon}
\newcommand{\vp}{\varphi}
\newcommand{\M}{\mathbf{M}}
\newcommand{\ts}{\otimes}
\newcommand{\s}{\sigma}
\newcommand{\ff}{\mathcal{F}}
\newcommand{\mc}{\mathbb{K}}
\newcommand{\wt}{\widetilde}
\newcommand{\id}{\operatorname*{id}}
\newcommand{\ad}{\operatorname*{ad}}
\newcommand{\hht}{\operatorname*{ht}}
\newcommand{\corad}{\operatorname*{corad}}
\newcommand{\Hom}{\operatorname*{Hom}}
\newcommand{\End}{\operatorname*{End}}
\newtheorem{definition}{Definition}
\newtheorem{proposition}{Proposition}
\newtheorem{theorem}{Theorem}
\newtheorem{lemma}{Lemma}
\newtheorem{corollary}{Corollary}
\newtheorem{remark}{Remark}
\newtheorem{problem}{Problem}
\title{Multi-brace cotensor Hopf algebras and quantum groups}
\author{Xin Fang and Marc Rosso}
\address{Universit\'e Paris Diderot - Paris VII, UFR de Math\'ematiques, Case 7012, B\^atiment Chevaleret, 75205 Paris Cedex 13, FRANCE.}
\email{fang@math.jussieu.fr, rosso@math.jussieu.fr}
\begin{document}

\maketitle

\begin{abstract}
We construct multi-brace cotensor Hopf algebras with bosonizations of quantum multi-brace algebras as examples. Quantum quasi-symmetric algebras are then obtained by taking particular initial data; this allows us to realize the whole quantum group associated to a symmetrizable Kac-Moody Lie algebra as a quantum quasi-symmetric algebra and all highest weight irreducible representations can be constructed using this machinery. It also provides a systematic way to construct simple modules over the quantum double of a quantum group.
\end{abstract}

\section{Introduction}
The work in this paper arose from two different motivations.

\subsection{First motivation: classifying algebra structures}
The tensor space $T(V)$ associated with a vector space $V$ is a connected coalgebra with the deconcatenation coproduct. All admissible associative products compatible with this coproduct are classified by Loday and Ronco in \cite{LR06} by investigating associativity restrictions on linear maps $\M_{pq}:V^{\ts p}\ts V^{\ts q}\ra V$. This yields a connected bialgebra $T(V)$.
\par
In the eighties of the last century, quantized enveloping algebras (quantum groups) are constructed in the work of Drinfel'd and Jimbo with the aim of finding explicit solutions of Yang-Baxter equations: such a solution gives rise to a vector space $V$ with a braiding $\s\in GL(V\ts V)$, which is called a braided vector space.
\par
Given a braided vector space $(V,\s)$ and the coalgebra $T(V)$ endowed with the deconcatenation coproduct, Jian and the second named author \cite{JR12} classified all associative products on $T(V)$ to yield a braided bialgebra: besides a braided version of associativity restrictions on $\M_{pq}:V^{\ts p}\ts V^{\ts q}\ra V$, these maps must be compatible with the braiding $\s$ (Definition 4.9, \textit{loc.cit}).
\par
The coalgebra $T(V)$ appearing in the two constructions above is a particular case of the cotensor coalgebra $T_H^c(M)$ of an $H$-Hopf bimodule $M$ on a Hopf algebra $H$ constructed by Nichols \cite{Nic78}. To recover the framework of Loday and Ronco, it suffices to take $H=\mc$ be the base field with the trivial Hopf algebra structure. In general, the set of right coinvariants $M^{coR}$ in $M$ admits a braiding arising from the $H$-Yetter-Drinfel'd module structure on it: this gives naturally a braided vector space. The following problem asks for a generalization of the results quoted above:

\begin{problem}\label{Pb1}
Classify all associative products on the coalgebra $T_H^c(M)$ yielding a Hopf algebra.
\end{problem}

\subsection{Second motivation: construction of quantum groups}
One of the central problems in the theory of quantum groups is to find different realizations of them. After the original definition using generators and relations due to Drinfel'd and Jimbo, Ringel \cite{Rin90} and Green \cite{Gre95} found the  negative (or positive) part of a (specialized) quantum group inside the Hall algebra of the category of quiver representations; this motivates the construction of Lusztig \cite{Lus91} using perverse sheaves on quiver varieties. The other construction of a half of the quantum group is given by the second named author using quantum shuffle algebras \cite{Ros98}.
\par
These works largely promote new developments in representation theory and in Hopf algebra theory: the former gives the existence of the canonical bases and the latter opens the gate to the classification of finite dimensional pointed Hopf algebras. A large quantity of works are dedicated to the study of these new features from different view points.
\par
It should be remarked that a half of the quantum coordinate algebra associated with a quantum group is realized recently by Gei\ss, Leclerc and Schr\"oer \cite{GLS11} as a quantum cluster algebra in a quantum torus.
\par
All these constructions above concern only half of a quantum group. It is therefore natural to ask for a construction of the whole quantum group parallel to the two constructions above.
\par
In a recent preprint \cite{Bri11}, Bridgeland gives a first successful attempt to construct the whole quantum group using Hall algebras: he used the category of $\mathbb{Z}/2$-graded complexes formed by quiver representations to imitate the positive and negative parts and restricted the morphisms between the odd-graded and even-graded parts to obtain the correct commutation relation between them. As this construction has its roots in the representation theory of quivers, only symmetrized Cartan matrices are dealt with.

\begin{problem}\label{Pb2}
Give a construction of the whole quantum group in spirit of (quantum) shuffle algebras.
\end{problem}

One of the main objectives of this paper is to give answers to these problems.

\subsection{Multi-brace cotensor Hopf algebras and quantum symmetric algebras}

According to the universal property of the cotensor coalgebra $T_H^c(M)$ due to Nichols \cite{Nic78}, a coalgebra map $T_H^C(M)\ts T_H^C(M)\ra T_H^C(M)$ is uniquely determined by its projections onto degree 0: $g:T_H^C(M)\ts T_H^C(M)\ra H$ and degree 1: $f:T_H^C(M)\ts T_H^C(M)\ra M$. We define the property (\textbf{MB}) for the pair $(f,g)$ and show that such pairs produce associative products on the cotensor coalgebra to yield a Hopf algebra structure. This solves Problem \ref{Pb1}.
\par
This construction generalizes quantum multi-brace algebras defined in \cite{JR12}: we introduce the notion of a graded pair characterized by the property that the inclusion $H\ra T_H^C(M)$ and the projection $T_H^C(M)\ra H$ fit into the framework of "Hopf algebra with a projection" due to Radford \cite{Rad}. If the pair $(f,g)$ is graded, the set of right coinvariants with respect to the right $H$-Hopf module structure on $T_H^C(M)$ admits a quantum multi-brace algebra structure. In this case, the multi-brace cotensor Hopf algebra is a bosonization of the corresponding quantum multi-brace algebra.
\par
Some particular cases of these algebras are of great interest.
\begin{enumerate}
\item If $(f,g)$ is graded and $f$ concentrates on bidegree $(0,1)$ and $(1,0)$ where it is given by the $H$-module structural maps on $M$, the product is the quantum symmetric algebra \cite{Ros98} (or equivalently, bosonization of a Nichols algebra \cite{AS02}).
\item If it is required moreover in (1) that $f$ is not zero on bidegree $(1,1)$, the result is a bosonization of the quantum quasi-shuffle algebra studied in \cite{JRZ11} and \cite{J12} as a quantization of the quasi-shuffle algebra \cite{NR79}; it is called quantum quasi-symmetric algebra. 
\end{enumerate}

Interesting examples of quantum quasi-shuffle algebras such as Rota-Baxter algebras and tridendriform algebras are constructed in \cite{J12}.

\subsection{Construction of quantum groups and their representations}

The quantum quasi-symmetric algebra enables us to construct the whole quantum group: we introduce some "dummy parameters" $\{\xi_i\}$ as the bracket of the generators in the positive and negative parts and then specialize them to the torus part. The choice of specialization seems arbitrary, but if it is demanded that the structure of Hopf algebra should not be destroyed, we have (up to scalar) a unique candidate: the right hand side in the commutation relation of quantum groups!
\par
The advantage of this construction is at least twofold: we could start with any symmetrizable generalized Cartan matrices, and all highest weight irreducible representations can be naturally constructed as the set of coinvariants by considering a Radford pair (see Theorem \ref{Thm:Rep}). Moreover, this machinery produces not only simple modules over quantum groups but also simple modules over the quantum double of a quantum group (the double of a double). This gives natural explanations of some results due to Baumann and Schmidt \cite{BS98} and Joseph \cite{Jos99}.

\subsection{Organization of this paper}
Hopf algebra structures on a given cotensor coalgebra are classified in Section \ref{Sec1} and examples related to Nichols algebras, quantum multi-brace algebras and quantum quasi-symmetric algebras are discussed in Section \ref{Sec2}. Quantum groups associated with a symmetrizable Kac-Moody Lie algebra are constructed as specializations of quantum quasi-symmetric algebras in Section \ref{Sec:main} and highest weight irreducible representations are obtained by considering a Radford pair in Section \ref{Sec4}, where modules over the quantum double of a quantum group are studied. Finally we prove the uniqueness of the specialization in Section \ref{Sec5}.

\subsection{Notations}
In this paper, we let $\mc$ denote a field of characteristic 0.

\section{Pairs of property (\textbf{MB})}\label{Sec1}

\subsection{Cotensor coalgebras}
The cotensor product of two bicomodules over a coalgebra $C$ is a dual construction of the tensor product over an algebra. 
\par
Let $C$ be a coalgebra and $M,N$ be two $C$-bicomodules with structural maps $\delta_L$ and $\delta_R$. The cotensor product of $M$ and $N$ is a $C$-bicomodule defined as follows: we consider two linear maps $\delta_R\ts \id_N$, $\id_M\ts \delta_L:\ M\ts N\ra M\ts C\ts N$; the cotensor product of $M$ and $N$, denoted by $M\Box_C N$, is the equalizer of $\delta_R\ts \id_N$ and $\id_M\ts \delta_L$.
\par
The cotensor coalgebra $T_C^c(M)$ is defined as a graded vector space with $T_C^c(M)_0=C$, $T_C^c(M)_1=M$ and $T_C^c(M)_k=\Box^k_CM$ for $k>1$. The coproduct is graded and is defined as follows: 
$$\Delta:\Box^n_CM\ra \sum_{i+j=n}\Box^i_CM\ts \Box^j_CM,$$ 
whose $(i,j)$-component is given by: for $i=0$ or $j=0$, it is the bicomodule structure map; for $i,j>0$, it is induced by the deconcatenation coproduct 
$$m_1\ts\cdots\ts m_n\mapsto  (m_1\ts\cdots\ts m_i)\ts (m_{i+1}\ts\cdots\ts m_n).$$
We let $\pi:T_C^c(M)\ra C$ and $p:T_C^c(M)\ra M$ denote the projection onto degree $0$ and $1$, respectively. Then the counit of $T_C^c(M)$ is $\ve_C\circ\pi$.
\par
The main construction of the multi-brace cotensor Hopf algebra comes from the following universal property of cotensor coalgebras:

\begin{proposition}[\cite{Nic78}]\label{Prop:1}
Let $C,D$ be two coalgebras, $g:D\ra C$ be a coalgebra morphism, $M$ be a $C$-bicomodule and $f:D\ra M$ be a $C$-bicomodule morphism such that $f(\corad(D))=0$. Then there exists a unique coalgebra map $F:D\ra T_C^c(M)$ such that the following two diagrams commute:
$$\xymatrix{
  D \ar[dr]_-{g} \ar[r]^-{F}
                & T_C^c(M) \ar[d]^-{\pi}  \\
                & C            },\ \ \ \ 
      \xymatrix{
  D \ar[dr]_-{f} \ar[r]^-{F}
                & T_C^c(M) \ar[d]^-{p}  \\
                & M            }.$$

\end{proposition}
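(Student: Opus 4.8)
The plan is to construct $F$ by hand out of the data $(g,f)$ and then verify the three points that matter: that the homogeneous components of $F$ genuinely land in the cotensor powers $\Box_C^nM$, that the resulting map is a morphism of coalgebras projecting correctly onto $C$ and $M$, and that it is the only such map. Throughout I view $D$ as a $C$-bicomodule through $g$, with $\delta_L^D=(g\ts\id_D)\circ\Delta_D$ and $\delta_R^D=(\id_D\ts g)\circ\Delta_D$; the hypothesis that $f$ is a $C$-bicomodule morphism then reads $\delta_L^M\circ f=(g\ts f)\circ\Delta_D$ and $\delta_R^M\circ f=(f\ts g)\circ\Delta_D$.

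Set $F_0=g$ and, for $n\geq 1$, $F_n=f^{\ts n}\circ\Delta_D^{(n-1)}$, where $\Delta_D^{(n-1)}\colon D\ra D^{\ts n}$ is the iterated coproduct ($\Delta_D^{(0)}=\id_D$, so $F_1=f$); the candidate is $F=\sum_{n\geq 0}F_n$. First I would show $F_n(D)\subseteq\Box_C^nM$, using the description of $\Box_C^nM$ as the subspace of $M^{\ts n}$ cut out by the $n-1$ equalizer conditions at the adjacent pairs of tensorands: inserting $F_n$ into the $i$-th condition, rewriting the $i$-th slot via $\delta_R^M\circ f=(f\ts g)\circ\Delta_D$ and the $(i{+}1)$-st slot via $\delta_L^M\circ f=(g\ts f)\circ\Delta_D$, and invoking coassociativity of $\Delta_D$, both sides collapse to $f^{\ts i}\ts g\ts f^{\ts(n-i-1)}$ applied to $\Delta_D^{(n)}$, so they agree. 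Next, $F=\sum_n F_n$ must be a locally finite sum: since $D$ is the union of its coradical filtration $D_0=\corad(D)\subseteq D_1\subseteq\cdots$, which is a coalgebra filtration, one has $\Delta_D^{(n-1)}(D_m)\subseteq\sum_{i_1+\cdots+i_n=m}D_{i_1}\ts\cdots\ts D_{i_n}$, and for $n>m$ each summand has a slot with index $0$, killed by $f$ because $f(D_0)=0$; hence $F_n|_{D_m}=0$ for $n>m$. So $F\colon D\ra T_C^c(M)$ is well defined, with $\pi\circ F=F_0=g$ and $p\circ F=F_1=f$ by construction.

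It remains to see $F$ is a coalgebra morphism and is unique. The counit identity is immediate: $\ve_C\circ\pi\circ F=\ve_C\circ g=\ve_D$. For comultiplicativity I would compare $\Delta_{T_C^c(M)}\circ F$ with $(F\ts F)\circ\Delta_D$ bidegree by bidegree, i.e. check $[\Delta_{T_C^c(M)}]_{i,j}\circ F_{i+j}=(F_i\ts F_j)\circ\Delta_D$ for all $i,j$: when $i,j>0$ the left side is the deconcatenation of $f^{\ts(i+j)}\circ\Delta_D^{(i+j-1)}$ into its first $i$ and last $j$ factors, equal to $(F_i\ts F_j)\circ\Delta_D$ by coassociativity; when $i=0$ (resp. $j=0$) one uses that the $(0,n)$-component (resp. $(n,0)$-component) of $\Delta_{T_C^c(M)}$ on $\Box_C^nM$ is the left (resp. right) comodule structure $\delta_L^M\ts\id^{\ts(n-1)}$ (resp. $\id^{\ts(n-1)}\ts\delta_R^M$), together with the bicomodule identities for $f$; and $(0,0)$ is just that $g$ is a coalgebra map. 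For uniqueness, any other such $F'=\sum_nF'_n$ satisfies $F'_0=\pi\circ F'=g$ and $F'_1=p\circ F'=f$, and then inductively the $(1,n{-}1)$-bidegree component of the comultiplicativity of $F'$ yields $[\Delta_{T_C^c(M)}]_{1,n-1}\circ F'_n=(F'_1\ts F'_{n-1})\circ\Delta_D$; since $[\Delta_{T_C^c(M)}]_{1,n-1}\colon\Box_C^nM\ra M\ts\Box_C^{n-1}M$ is the canonical inclusion, hence injective, this determines $F'_n$, forcing $F'=F$.

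I expect the one genuinely delicate step to be the first: fixing the right description of $\Box_C^nM$ inside $M^{\ts n}$ and checking that a single map $F_n$ satisfies all $n-1$ adjacent equalizer constraints at once; the argument hinges on applying the two bicomodule identities for $f$ in two neighbouring slots and then folding the result back together with coassociativity. Everything else is bookkeeping: the local-finiteness argument is a direct consequence of the coradical filtration being a coalgebra filtration together with $f(\corad(D))=0$, the comultiplicativity check is a finite case analysis over bidegrees, and uniqueness is a short induction on degree powered by the injectivity of the deconcatenation map.
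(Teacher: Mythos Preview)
Your proof is correct and complete. Note, however, that the paper does not actually prove this proposition: it is cited from Nichols \cite{Nic78}, and the paper only records the explicit formula $F=g+\sum_{n\geq 1}f^{\ts n}\Delta^{(n)}$ (with the convention $\Delta^{(1)}=\id$, so this is your $\sum_n F_n$) together with a one-line remark that each $f^{\ts n}\Delta^{(n)}$ has image in $M^{\Box n}$. Your argument fleshes out exactly this construction---the equalizer check via the bicomodule identities for $f$, the local finiteness from the coradical filtration and $f(\corad D)=0$, the bidegree-by-bidegree comultiplicativity, and the uniqueness induction using injectivity of deconcatenation---so you are on the same track as the paper and supplying the details it leaves to the reference.
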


It should be remarked that $D$ is a $C$-bicomodule with structural maps $(g\ts {\id})\Delta$ and $({\id}\ts g)\Delta$. Moreover, the coalgebra map $F$ has the following explicit form:
$$F=g+\sum_{n\geq 1}f^{\ts n}\Delta^{(n)},$$
where we use the notation that $\Delta^{(1)}={\id}$ and $\Delta^{(n)}=({\id}^{\ts n-2}\ts\Delta)\Delta^{(n-1)}$.

\begin{remark}
In fact, a direct verification shows that for any $n=1,\cdots, i+j$, $f^{\ts n}\Delta^{(n)}:M^{\Box i}\ts M^{\Box j}\ra M^{\ts n}$ has image in $M^{\Box n}$. 
\end{remark}

\subsection{Property (\textbf{MB})}
We will adopt the notation $\uts$ when the tensor product is between two tensor algebras.
\par
Let $H$ be a Hopf algebra and $M$ be an $H$-Hopf bimodule. We construct algebra structures and determine all these structures on $T_H^c(M)$ compatible with the deconcatenation coproduct. To be more precise, we take $D=T_H^c(M)\underline{\ts} T_H^c(M)$ and ask under which conditions on $f$ and $g$, the coalgebra map given in Proposition \ref{Prop:1}
$$F:T_H^c(M)\underline{\ts} T_H^c(M)\ra T_H^c(M)$$ 
is associative.

As a vector space, $$T_H^c(M)=H\oplus \bigoplus_{n>0}M^{\Box n},$$
then $T_H^c(M)\underline{\ts} T_H^c(M)$ decomposes into the following sum:
$$T_H^c(M)\underline{\ts} T_H^c(M)=(H\underline{\ts} H)\oplus\bigoplus_{n>0}\left(\left(H\underline{\ts} M^{\Box n}\right)\oplus\left(M^{\Box n}\underline{\ts} H\right)\right)\oplus\bigoplus_{p,q>0}\left(M^{\Box p}\underline{\ts} M^{\Box q}\right).$$
Therefore, the maps $f:T_H^c(M)\underline{\ts} T_H^c(M)\ra M$ and $g:T_H^c(M)\underline{\ts} T_H^c(M)\ra H$ are determined by their restrictions:

$$
\left\{\begin{matrix}g_{00}:H\underline{\ts} H\ra H\\ g_{0n}:H\underline{\ts} M^{\Box n}\ra H\\ g_{n0}:M^{\Box n}\underline{\ts} H\ra H\\ g_{pq}:M^{\Box p}\underline{\ts} M^{\Box q}\ra H\end{matrix}\right.\ \ \text{and} \ \ 
\left\{\begin{matrix}f_{00}:H\underline{\ts} H\ra M\\ f_{0n}:H\underline{\ts} M^{\Box n}\ra M\\ f_{n0}:M^{\Box n}\underline{\ts} H\ra M\\ f_{pq}:M^{\Box p}\underline{\ts} M^{\Box q}\ra M\end{matrix}\right. .$$

We consider the following module and comodule structures on $M^{\Box p}$: the $H$-bicomodule structure comes from two external components; the $H$-bimodule structure is induced by the inclusion $M^{\Box p}\subset M^{\ts p}$ where the latter admits the bimodule structure arising form the tensor product. Similarly, there exists an $H$-Hopf bimodule structure on $T_H^c(M)\underline{\ts} T_H^c(M)$ where the bimodule structure arises from the two external components and the bicomodule structure comes from the tensor product.

\begin{definition}
The pair $(f,g)$ with $f:T_H^c(M)\underline{\ts} T_H^c(M)\ra M$ and $g:T_H^c(M)\underline{\ts} T_H^c(M)\ra H$ is said to have property (\textbf{MB}) if
\begin{enumerate}[(MB1)]
\item $g$ is a coalgebra morphism; $f$ is an $H$-Hopf bimodule morphism with $f_{00}=0$;
\item $g$ is associative: for any $i,j,k\geq 0$, on $M^{\Box i}\underline{\ts} M^{\Box j}\underline{\ts} M^{\Box k}$ where $H=M^{\Box 0}$, we have
$$g_{0k}(g_{ij}\underline{\ts} {\id}^{\ts k})+\sum_{n=1}^{i+j}g_{nk}(f^{\ts n}\Delta^{(n)}\underline{\ts}  {\id}^{\ts k})=g_{i0}({\id}^{\ts i}\underline{\ts} g_{jk})+\sum_{m=1}^{j+k}g_{im}({\id}^{\ts i}\underline{\ts} f^{\ts m}\Delta^{(m)});$$
\item $f$ is associative: for any $i,j,k\geq 0$, on $M^{\Box i}\underline{\ts} M^{\Box j}\underline{\ts} M^{\Box k}$ where  $H=M^{\Box 0}$, we have
$$f_{0k}(g_{ij}\underline{\ts} {\id})+\sum_{n=1}^{i+j}f_{nk}(f^{\ts n}\Delta^{(n)}\underline{\ts} {\id}^{\ts k})=f_{i0}({\id}\underline{\ts} g_{jk})+\sum_{m=1}^{j+k}f_{im}({\id}^{\ts i}\underline{\ts} f^{\ts m}\Delta^{(m)}).$$
\end{enumerate}
\end{definition}
If the pair $(f,g)$ satisfies the property (\textbf{MB}), according the condition (MB1) in the definition, there exists a coalgebra map $\mu:T_H^c(M)\underline{\ts} T_H^c(M)\ra T_H^c(M)$ given by
$$\mu=g+\sum_{n\geq 1}f^{\ts n}\Delta^{(n)}$$
satisfying $\pi\circ\mu=g$ and $p\circ\mu=f$ where $\pi:T_H^c(M)\ra H$ and $p:T_H^c(M)\ra M$ are projections onto degree $0$ and $1$, respectively.
\begin{theorem}
The coalgebra map $\mu$ is associative if and only if $(f,g)$ is a pair having property (\textbf{MB}). Furthermore, with the deconcatenation coproduct, $T_H^c(M)$ is a Hopf algebra.
\end{theorem}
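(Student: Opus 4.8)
The plan is to exploit the universal property of the cotensor coalgebra (Proposition \ref{Prop:1}) twice: the map $\mu$ is a coalgebra morphism $T_H^c(M)\uts T_H^c(M)\ra T_H^c(M)$, hence both threefold composites $\mu\circ(\mu\uts\id)$ and $\mu\circ(\id\uts\mu)$ are coalgebra morphisms from the connected (over $H$) cotensor coalgebra $T_H^c(M)\uts T_H^c(M)\uts T_H^c(M)$ into $T_H^c(M)$. By the uniqueness clause of Proposition \ref{Prop:1}, two coalgebra morphisms into $T_H^c(M)$ agree as soon as their projections $\pi$ to degree $0$ and $p$ to degree $1$ agree; so associativity of $\mu$ is \emph{equivalent} to the two identities $\pi\mu(\mu\uts\id)=\pi\mu(\id\uts\mu)$ and $p\mu(\mu\uts\id)=p\mu(\id\uts\mu)$. (One must first check that $\mu\uts\id$ and $\id\uts\mu$ are themselves coalgebra morphisms and that the composites kill the coradical of the triple tensor product so that Proposition \ref{Prop:1} applies — this is where the hypothesis $f_{00}=0$, equivalently $f$ vanishing on the coradical, enters.) First I would set up this reduction carefully.

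Next I would unwind the two equivalent identities degree by degree on the direct-sum decomposition $M^{\Box i}\uts M^{\Box j}\uts M^{\Box k}$. Using $\pi\mu=g$, $p\mu=f$ and the explicit formula $\mu=g+\sum_{n\ge 1}f^{\ts n}\Delta^{(n)}$ together with the coassociativity of $\Delta$ and the (bi)comodule compatibility recorded in the Remark after Proposition \ref{Prop:1} (namely that $f^{\ts n}\Delta^{(n)}$ lands in $M^{\Box n}$), the projection $\pi\mu(\mu\uts\id)$ on $M^{\Box i}\uts M^{\Box j}\uts M^{\Box k}$ expands precisely to the left-hand side of (MB2), and $\pi\mu(\id\uts\mu)$ to its right-hand side; similarly $p\mu(\mu\uts\id)$ and $p\mu(\id\uts\mu)$ produce the two sides of (MB3). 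The bookkeeping here is the reason the sums in (MB2)--(MB3) have exactly the ranges $1,\dots,i+j$ and $1,\dots,j+k$: applying $\mu$ to a factor sitting in bidegree $(i,j)$ produces contributions in every degree $n$ with $0\le n\le i+j$, the $n=0$ piece being $g_{ij}$ and the $n\ge 1$ pieces being $f^{\ts n}\Delta^{(n)}$ followed by another application of $f$ or $g$. Condition (MB1) is what guarantees $\mu$ is well defined as a coalgebra map in the first place, so the theorem's statement is really: (MB1) makes $\mu$ a coalgebra map, and then (MB2)+(MB3) $\iff$ associativity.

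It then remains to assemble the bialgebra into a Hopf algebra. The unit is the composite $\mc\ra H\hookrightarrow T_H^c(M)$; that it is a two-sided unit for $\mu$ follows from evaluating the defining identities on the bidegrees $(i,0)$ and $(0,j)$, where one reads off from (MB1)--(MB3) that $g$ restricts to the multiplication of $H$ and the $H$-actions, and $f$ to the $H$-(co)module structure maps of $M$ — so $1_H$ acts as identity in every degree. Compatibility of $\mu$ with the deconcatenation $\Delta$ is automatic because $\mu$ was constructed as a coalgebra morphism and the tensor coalgebra structure on $T_H^c(M)\uts T_H^c(M)$ is exactly the one making this statement meaningful; the counit axiom likewise is immediate from $\ve=\ve_H\circ\pi$ and $\pi\mu=g$. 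Finally, $T_H^c(M)$ is graded with degree-$0$ part the Hopf algebra $H$ and is generated in degrees $0$ and $1$ in the filtered sense, hence connected over $H$; a graded (or coradically filtered) bialgebra whose degree-zero part is a Hopf algebra admits an antipode, constructed degree by degree by the standard recursion inverting $\id$ in the convolution algebra $\End(T_H^c(M))$. I expect the main obstacle to be purely organizational: verifying that the degree-$0$ and degree-$1$ projections of the two threefold composites expand to \emph{exactly} the displayed sums in (MB2) and (MB3), i.e. matching the combinatorics of iterated deconcatenation against the iterated application of $\mu$, rather than any conceptual difficulty.
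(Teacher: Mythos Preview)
Your proposal is correct and follows essentially the same route as the paper: reduce associativity of $\mu$ via the universal property to the equalities $\pi\mu(\mu\uts\id)=\pi\mu(\id\uts\mu)$ and $p\mu(\mu\uts\id)=p\mu(\id\uts\mu)$, then expand both sides using $\mu=g+\sum_{n\ge 1}f^{\ts n}\Delta^{(n)}$ and restrict to $M^{\Box i}\uts M^{\Box j}\uts M^{\Box k}$ to recover (MB2) and (MB3). The only minor divergence is in the antipode: the paper invokes Takeuchi's lemma (convolution invertibility of $\id$ is tested on the coradical $\corad(T_H^c(M))=\corad(H)$, where it holds since $H$ is a Hopf algebra), whereas you sketch the equivalent recursion along the coradical filtration; just be careful that $T_H^c(M)$ is in general \emph{not} a graded algebra, so the argument must run through the coradical filtration rather than the degree grading.
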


\begin{proof}
We start by showing that $\mu$ is associative: $\mu({\id}\underline{\ts}\mu)=\mu(\mu\underline{\ts}{\id})$. According to the universal property, it suffices to show:
$$\pi\mu(\mu\underline{\ts} {\id})=\pi\mu({\id}\underline{\ts}\mu):T_H^c(M)\underline{\ts} T_H^c(M)\underline{\ts} T_H^c(M)\ra H$$
and
$$p\mu(\mu\underline{\ts}{\id})=p\mu({\id}\underline{\ts} \mu):T_H^c(M)\underline{\ts} T_H^c(M)\underline{\ts} T_H^c(M)\ra M.$$
According to the explicit form of $\mu$:
\begin{eqnarray*}
\mu(\mu\uts{\id})&=&g(g\uts {\id})+g(\sum_{n>0} f^{\ts n}\Delta^{(n)}\uts {\id})+\\
&+& \sum_{n>0}f^{\ts n}\Delta^{(n)}(g\uts {\id})+\sum_{n>0}f^{\ts n}\Delta^{(n)}(\sum_{m>0}f^{\ts m}\Delta^{(m)}\uts{\id}).
\end{eqnarray*}
\begin{eqnarray*}
\mu({\id}\uts \mu)&=&g({\id}\uts g)+g({\id}\uts \sum_{n>0} f^{\ts n}\Delta^{(n)})+\\
&+& \sum_{n>0}f^{\ts n}\Delta^{(n)}({\id}\uts g)+\sum_{n>0}f^{\ts n}\Delta^{(n)}({\id}\uts \sum_{m>0}f^{\ts m}\Delta^{(m)}).
\end{eqnarray*}

Applying $p$ and $\pi$ gives:
\begin{eqnarray*}
\pi\mu(\mu\uts id)=g(g\uts {\id})+g(\sum_{n>0} f^{\ts n}\Delta^{(n)}\uts {\id}),\end{eqnarray*}
\begin{eqnarray*}
\pi\mu({\id}\uts \mu)=g({\id} \uts g)+g({\id}\uts \sum_{n>0} f^{\ts n}\Delta^{(n)}),
\end{eqnarray*}
\begin{eqnarray*}
p\mu(\mu\uts {\id})=p\left(\sum_{n>0}f^{\ts n}\Delta^{(n)}(g\uts {\id})+\sum_{n>0}f^{\ts n}\Delta^{(n)}(\sum_{m>0}f^{\ts m}\Delta^{(m)}\uts {\id})\right),
\end{eqnarray*}
\begin{eqnarray*}
p\mu({\id}\uts\mu)=p\left(\sum_{n>0}f^{\ts n}\Delta^{(n)}({\id}\uts g)+\sum_{n>0}f^{\ts n}\Delta^{(n)}({\id}\uts \sum_{m>0}f^{\ts m}\Delta^{(m)})\right).
\end{eqnarray*}
Once restricted to the component $M^{\Box i}\uts M^{\Box j}\uts M^{\Box k}$, the condition $\pi\mu(\mu\uts {\id})=\pi\mu({\id}\uts \mu)$ can be written as
$$g_{0k}(g_{ij}\uts {\id}^{\ts k})+\sum_{n=1}^{i+j}g_{nk}(f^{\ts n}\Delta^{(n)}\uts  {\id}^{\ts k})=g_{i0}({\id}^{\ts i}\uts g_{jk})+\sum_{m=1}^{j+k}g_{im}({\id}^{\ts i}\uts f^{\ts m}\Delta^{(m)})$$
and $p\mu(\mu\uts {\id})=p\mu({\id}\uts\mu)$ gives:
$$f_{0k}(g_{ij}\uts {\id})+\sum_{n=1}^{i+j}f_{nk}(f^{\ts n}\Delta^{(n)}\uts {\id}^{\ts k})=f_{i0}({\id}\uts g_{jk})+\sum_{m=1}^{j+k}f_{im}({\id}^{\ts i}\uts f^{\ts m}\Delta^{(m)}).$$
As the pair $(f,g)$ satisfies the property (\textbf{MB}), identities above hold according to the conditions (MB2) and (MB3). This proves the associativity of $\mu$. Since $\mu$ is a coalgebra morphism, $T_H^c(M)$ is a bialgebra.
\par
To show $T_H^c(M)$ is a Hopf algebra, we use the following lemma due to Takeuchi \cite{Tak}:

\begin{lemma}
Let $C$ be a coalgebra with $C_0=\corad(C)$ its coradical and $A$ be an algebra. Then $\Hom(C,A)$ admits the convolution product $\ast$. For any $f\in\Hom(C,A)$, $f$ is invertible under $\ast$ if and only if $f|_{C_0}$ is invertible in $\Hom(C_0,A)$.
\end{lemma}

Here we take $C=T_H^c(M)=A$, then $C_0=\corad(C)=\corad(H)$. It suffices to show ${\id}_{T_H^c(M)}$ is convolution invertible, this is clear from the lemma above as $H$ is a Hopf algebra.
\end{proof}

\begin{definition}
We call the Hopf algebra $T_H^c(M)$ obtained in the theorem above a multi-brace cotensor Hopf algebra.
\end{definition}

\begin{remark} 
In general, the Hopf algebra $T_H^c(M)$ defined above is not a graded algebra, but the coproduct is always graded.
\end{remark}

\subsection{Graded pairs}\label{Sec:Graded}

Graded pairs characterize multi-brace cotensor Hopf algebras arising from bosonizations of the cofree braided Hopf algebra \cite{JR12}.

\begin{definition}
A pair $(f,g)$ with property (\textbf{MB}) is called graded if $g_{00}=m_H$ is the multiplication in $H$ and for any $(p,q)\neq (0,0)$, $g_{pq}=0$.
\end{definition}

The following proposition gives an equivalent but more convenient description of graded pairs.

\begin{proposition}
The projection $\pi:T_H^c(M)\ra H$ onto degree $0$ is an algebra morphism if and only if the pair $(f,g)$ is graded. Moreover, if the pair $(f,g)$ is graded, $\pi$ is a morphism of Hopf algebra.
\end{proposition}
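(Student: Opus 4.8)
The plan is to characterize the graded condition through the algebra-morphism property of $\pi$, and then to upgrade it to a Hopf-algebra morphism by invoking the universal property of cotensor coalgebras together with the bialgebra structure just constructed.

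First I would unwind what it means for $\pi$ to be an algebra morphism. Since $\pi\circ\mu = g$ by construction, the statement $\pi(xy) = \pi(x)\pi(y)$ for $x,y \in T_H^c(M)$ is exactly $g = m_H \circ (\pi \underline{\ts} \pi)$. Reading this off on each bigraded component $M^{\Box p}\underline{\ts}M^{\Box q}$: the right-hand side $m_H\circ(\pi\underline{\ts}\pi)$ is zero unless $p=q=0$, where it is $m_H$. Hence $\pi$ is an algebra morphism if and only if $g_{00}=m_H$ and $g_{pq}=0$ for all $(p,q)\neq(0,0)$, which is precisely the definition of a graded pair. One direction is then immediate; the converse is the same computation read backwards. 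I should also note $\pi$ is unital, since $\mu$ sends $1_H\underline{\ts}1_H$ to $1_H$ and hence $\pi$ preserves units — this is automatic from $g_{00}=m_H$.

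For the "moreover" part, assuming $(f,g)$ graded, I must show $\pi$ is also a coalgebra morphism; combined with the above this makes it a bialgebra morphism, and since $T_H^c(M)$ and $H$ are both Hopf algebras (the former by the preceding theorem), any bialgebra morphism between Hopf algebras is automatically a Hopf-algebra morphism (it commutes with the antipodes by uniqueness of the antipode as convolution inverse of the identity). That $\pi$ is a coalgebra morphism should follow directly from the explicit graded form of the coproduct on $T_H^c(M)$: on degree $0$ the coproduct restricts to $\Delta_H$, and the $(i,j)$-components of $\Delta$ with $i,j>0$ land outside degree $0$, so $(\pi\ts\pi)\Delta = \Delta_H\pi$; also $\ve_{T_H^c(M)} = \ve_H\circ\pi$ was already observed. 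This part is essentially bookkeeping against the definition of the cotensor coalgebra.

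The main obstacle — such as it is — is the bidegree analysis establishing that $m_H\circ(\pi\underline{\ts}\pi)$ genuinely vanishes off bidegree $(0,0)$; this rests on the fact that $\pi$ kills all $M^{\Box n}$ with $n>0$, which is true by definition of the projection onto degree $0$, so there is really no hard point, only care in matching the indexing conventions for $g_{pq}$ against the decomposition of $T_H^c(M)\underline{\ts}T_H^c(M)$ given earlier. I would write the forward direction as the evaluation of $g=\pi\mu$ on components, the reverse direction as the reassembly of $g = m_H\circ(\pi\underline{\ts}\pi)$ from $g_{00}=m_H$ and the vanishing of the other $g_{pq}$, and finish with the two-line argument that a bialgebra map between Hopf algebras respects antipodes.
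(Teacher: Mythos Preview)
Your proposal is correct and follows essentially the same approach as the paper: both arguments reduce to the identity $\pi\circ\mu=g$ and then read off the condition $g_{pq}=0$ for $(p,q)\neq(0,0)$, $g_{00}=m_H$ componentwise. The paper is slightly more explicit in expanding $\mu$ before applying $\pi$, and dismisses the ``moreover'' part as clear, whereas you spell out the coalgebra-morphism check and the antipode argument; but there is no substantive difference.
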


\begin{proof}
If $\pi:T_H^c(M)\ra H$ is an algebra morphism, for any $x\in M^{\Box p}$ and $y\in M^{\Box q}$, $\pi\mu(x\uts y)=\pi(x)\pi(y)$.
\par
Using the explicit form of $\mu$, the left hand side gives
\begin{eqnarray*}
\pi\mu(x\uts y) &=& \pi\left(g(x\uts y)+\sum_{n\geq 1} f^{\ts n}\Delta^{(n)}(x\uts y)\right)\\
&=& \pi(g_{pq}(x\uts y))\\
&=& g_{pq}(x\uts y).
\end{eqnarray*}
In the right hand side, if $(p,q)\neq (0,0)$, $\pi(x)\pi(y)=0$, this forces $g_{pq}=0$ for any $(p,q)\neq (0,0)$ and $g_{00}=m_H$.
\par
The rest of this proposition is clear.
\end{proof}
If the pair is graded, we can naturally find a braided Hopf algebra inside the cotensor Hopf algebra $T_H^c(M)$ using the machinery constructed by Radford \cite{Rad}.
\par
Let $(f,g)$ be a graded pair. We obtain two Hopf algebra morphisms $\pi:T_H^c(M)\ra H$ and $\iota:H\ra T_H^c(M)$ given by the projection onto and the embedding into degree $0$. It is clear that they satisfy $\pi\circ\iota={\id}_H$. Then we can use the construction of Radford on Hopf algebras with a projection to get an isomorphism of Hopf algebras
$$T_H^c(M)\cong Q\ts H$$
where 
$$Q=T_H^c(M)^{co\pi}=\{x\in T_H^c(M)|\ ({\id}\ts\pi)\Delta(x)=x\ts 1\}$$
is a braided Hopf algebra in the category ${}^H_H\mathcal{YD}$ of Yetter-Drinfel'd modules over $H$.
\par
As $M$ is a right $H$-Hopf module, we let $V=M^{coR}$ denote the set of right coinvariants in $M$. Then $V\subset Q$ is a braided vector space with the braiding $\s$ arising from the Yetter-Drinfel'd module structure. We let $Q_\s(V)$ denote the subalgebra of $Q$ generated by $V$; $Q_\s(V)$ is a braided Hopf algebra in the category ${}^H_H\mathcal{YD}$.
\par
Once a braided Hopf algebra $B$ in the category ${}^H_H\mathcal{YD}$ is given, we can form the bosonization of $B$ and $H$ (\cite{AS02}), denoted by $B\# H$: it is a Hopf algebra linearly isomorphic to $B\ts H$ with well-chosen algebra and coalgebra structures. When this construction is applied to the situation above, we obtain a Hopf algebra $Q_H(M)$ as the bosonization of $Q_\s(V)$ and $H$: it is isomorphic to the sub-Hopf algebra of $T_H^c(M)$ generated by $H$ and $M$.

\section{Examples}\label{Sec2}

\subsection{Pairs of Nichols type}
The simplest example of pairs $(f,g)$ with property (\textbf{MB}) arises from Nichols algebras.

\begin{definition}
A pair $(f,g)$ is called of Nichols type if 
\begin{enumerate}
\item $g_{00}=m_H$ and for any $(p,q)\neq (0,0)$, $g_{pq}=0$;
\item $f_{01}=a_L$, $f_{10}=a_R$ and for any $(p,q)\neq (0,1), (1,0)$, $f_{pq}=0$ where $a_L:H\uts M\ra M$ (resp. $a_R:M\uts H\ra M$) is the left (right) module structural map of $M$.
\end{enumerate}
\end{definition}

It is clear that the pair $(f,g)$ satisfies the property (\textbf{MB}) and moreover it is graded. The Hopf algebra $T_H^c(M)$ is the cotensor Hopf algebra defined in \cite{Nic78}.
\par
As the pair is graded, a braided Hopf algebra $Q_\s(V)$ can be associated to this pair as we have done in Section \ref{Sec:Graded}. This $Q_\s(V)$ is nothing but the quantum shuffle algebra $S_\s(V)$ defined in \cite{Ros98} which contains many important algebras such as symmetric algebras, exterior algebras, quantum planes and positive parts of quantum groups as examples.
\par
After the bosonization with $H$, we get the quantum symmetric algebra $S_H(M)$ defined in \cite{Ros98}.

\subsection{Pairs of multi-brace type}
When the pair $(f,g)$ is graded, the construction in Section \ref{Sec:Graded} gives the algebra defined and studied in \cite{JR12} associated to a multi-brace algebra.

\begin{definition}\label{Def:MB}
A pair $(f,g)$ is called of multi-brace type if 
\begin{enumerate}
\item $g_{00}=m_H$ and for any $(p,q)\neq (0,0)$, $g_{pq}=0$;
\item $f_{01}=a_L$, $f_{10}=a_R$ and for any $n\neq 1$, $f_{0n}=f_{n0}=0$ where $a_L:H\uts M\ra M$ (resp. $a_R:M\uts H\ra M$) is the left (right) module structural map of $M$;
\item $(f,g)$ satisfies the property (\textbf{MB}).
\end{enumerate}
\end{definition}

We simplify the expressions in the property (\textbf{MB}) with the help of conditions imposed on $g_{pq}$, $f_{n0}$ and $f_{n0}$. This gives the following equivalent definition.

\begin{proposition}\label{Lem:equiv}
$(f,g)$ is a pair of multi-brace type if and only if 
\begin{enumerate}
\item $g_{00}=m_H$ and for any $(p,q)\neq (0,0)$, $g_{pq}=0$;
\item $f_{01}=a_L$, $f_{10}=a_R$ and for any $n\neq 1$, $f_{0n}=f_{n0}=0$ where $a_L:H\uts M\ra M$ (resp. $a_R:M\uts H\ra M$) is the left (right) module structural map of $M$;
\item For any $p,q> 0$, $H$-Hopf bimodule morphisms $f_{pq}:M^{\Box p}\underline{\ts} M^{\Box q}\ra M$ passes through the quotient to give an $H$-Hopf bimodule morphism $f_{pq}:M^{\Box p}\underline{\ts}_H M^{\Box q}\ra M$;
\item $f$ is associative: for any $i,j,k\geq 1$, on $M^{\Box i}\underline{\ts} M^{\Box j}\underline{\ts} M^{\Box k}$,
$$\sum_{n=1}^{i+j}f_{nk}(f^{\ts n}\Delta^{(n)}\underline{\ts} {\id}^{\ts k})=\sum_{m=1}^{j+k}f_{im}({\id}^{\ts i}\underline{\ts} f^{\ts m}\Delta^{(m)}).$$
\end{enumerate}
\end{proposition}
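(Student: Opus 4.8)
The plan is to show the equivalence of Definition \ref{Def:MB} with the list of conditions in the proposition by a direct unwinding of the property (\textbf{MB}), using the extra vanishing hypotheses on $g_{pq}$ and $f_{n0}$, $f_{0n}$. Conditions (1) and (2) are common to both formulations, so the real content is to prove that, \emph{given} (1) and (2), condition (MB3) for $f$ is equivalent to the conjunction of (3) (the $H$-balanced factorization) and (4) (the restricted associativity). Condition (MB2) for $g$ should be checked to be automatically satisfied once (1) holds, so it imposes nothing and can be dropped; I would dispose of this first, since with $g_{pq}=0$ for $(p,q)\neq(0,0)$ and $g_{00}=m_H$, both sides of the (MB2) identity on $M^{\Box i}\uts M^{\Box j}\uts M^{\Box k}$ collapse to the associativity of $m_H$ (when $i=j=k=0$) or to $0=0$ (otherwise), using that $f^{\ts n}\Delta^{(n)}$ landing in $M^{\Box n}$ composed with $g_{nk}$ is zero for $n\geq 1$, $k\geq 1$, and for $k=0$ the term $g_{n0}$ vanishes by hypothesis.

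First I would analyze (MB3) in the three boundary regimes. When one of $i,j,k$ is zero, I claim (MB3) reduces, given (1) and (2), to the statement that $M^{\Box p}$ is an $H$-bimodule and $f_{01}=a_L$, $f_{10}=a_R$ are the module maps — i.e. it carries no new information beyond (2) together with the $H$-Hopf bimodule axioms. Concretely: for $i=0$, the left side is $f_{0k}(g_{0j}\uts\id)+\sum_{n=1}^{j}f_{nk}(f^{\ts n}\Delta^{(n)}\uts\id^{\ts k})$; using $g_{0j}=0$ for $j>0$ and the fact that $f^{\ts n}\Delta^{(n)}$ on $H\uts M^{\Box j}$ with $n\geq 2$ must produce a factor $f_{00}=0$ or involve $f_{0m}=0$, only small terms survive, and matching them against the right side $f_{00}(\id\uts g_{jk})+\dots$ yields exactly the compatibility of $a_L,a_R$ already built into the $H$-Hopf bimodule structure. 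I would present this as a short lemma and not grind the bookkeeping.

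The substantive step is the case $i,j,k\geq 1$. Here, on the left side of (MB3) the term with $n=0$ is $f_{0k}(g_{ij}\uts\id)=0$ since $g_{ij}=0$; on the right the term with $m=0$ is $f_{i0}(\id\uts g_{jk})=0$ likewise. What remains is precisely $\sum_{n=1}^{i+j}f_{nk}(f^{\ts n}\Delta^{(n)}\uts\id^{\ts k})=\sum_{m=1}^{j+k}f_{im}(\id^{\ts i}\uts f^{\ts m}\Delta^{(m)})$, which is condition (4). So (4) is exactly (MB3) restricted to $i,j,k\geq 1$, and conversely (MB3) for all indices splits as (4) plus the boundary cases handled above. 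What is \emph{not} yet visible is condition (3): the assertion that each $f_{pq}$ descends to $M^{\Box p}\uts_H M^{\Box q}$. I expect this to be the main obstacle, and I would derive it from applying (MB3) (or (MB2) for $g$, whichever is cleaner) in the mixed case $j=0$ with $i,k\geq 1$ — that is, on $M^{\Box i}\uts H\uts M^{\Box k}$ — where the identity forces $f_{ik}$ precomposed with (right action on the first slot) to equal $f_{ik}$ precomposed with (left action on the second slot), which is exactly the statement that $f_{ik}$ kills the difference, i.e. factors through the cotensor-over-$H$ balanced tensor product. One must be careful that $M^{\Box i}\uts_H M^{\Box k}$ still makes sense as an $H$-Hopf bimodule and that $f^{\ts n}\Delta^{(n)}$ restricted to this mixed component behaves as in the Remark after Proposition \ref{Prop:1}; granting those, the equivalence follows by assembling the boundary analysis, the $j=0$ computation giving (3), and the interior computation giving (4).
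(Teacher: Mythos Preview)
Your proposal is correct and follows essentially the same route as the paper: dispose of (MB2) as the associativity of $m_H$, then split (MB3)${}_{ijk}$ according to which indices vanish, extracting condition (4) from the interior case $i,j,k\geq 1$ and the factorization through $\uts_H$ (condition (3)) from the mixed case $j=0$, $i,k\geq 1$. The paper is slightly more explicit about the remaining boundary cases --- it records that (MB3)${}_{0pq}$ and (MB3)${}_{pq0}$ say exactly that $f_{pq}$ is a left (resp.\ right) $H$-module morphism, which is part of the Hopf bimodule morphism hypothesis packaged into condition (3), rather than being content-free as you phrase it --- but this is a minor bookkeeping point, not a gap in your argument.
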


\begin{proof}
Suppose that $(f,g)$ is a pair satisfying conditions (1) and (2) in Definition \ref{Def:MB}, we study the restrictions given by the property (\textbf{MB}). As $g_{00}=m_H$ and $g_{pq}=0$ for any $(p,q)\neq (0,0)$, (MB2) is equivalent to the associativity of $g_{00}=m_H$.
\par
To simplify the notation, we let (MB3)${}_{ijk}$ denote the condition restricted to $M^{\Box i}\underline{\ts}M^{\Box j}\underline{\ts}M^{\Box k}$.
\begin{enumerate}[(i)]
\item (MB3)${}_{000}$ always holds as $f_{00}=0$;
\item (MB3)${}_{010}$ is equivalent to $f_{10}(f_{01}\uts{\id}_H)=f_{01}({\id}\uts f_{10})$, which is the compatibility condition between $a_L$ and $a_R$;
\item (MB3)${}_{pq0}$ is equivalent to $f_{10}(f_{pq}\uts{\id}_H)=f_{pq}({\id}\uts f_{10})$, which says that $f_{pq}$ is a right $H$-module morphism;
\item (MB3)${}_{0pq}$ is equivalent to $f_{pq}$ is a left $H$-module morphism;
\item (MB3)${}_{p0q}$ is equivalent to $f_{pq}(f_{10}\uts{\id})=f_{pq}({\id}\uts f_{01})$, which means that $f_{pq}:M^{\Box p}\underline{\ts}M^{\Box q}\ra M$ factorizes through $M^{\Box p}\underline{\ts}_HM^{\Box q}$.
\item For $p,q,r>0$, (MB3)${}_{pqr}$ coincides with the condition (4).
 \end{enumerate}
 The equivalence of two definitions is then clear.
\end{proof}

As a pairing $(f,g)$ of multi-brace type is graded, we let $Q_\s(V)$ denote the braided Hopf algebra in the category ${}_H^H\mathcal{YD}$ naturally associated to this pair as in Section \ref{Sec:Graded}.
\par
We recall that $V=M^{coR}$ is the set of right coinvariants in $M$ which admits a braiding $\s:V\ts V\ra V\ts V$ given by 
$$\s(v\ts w)=\sum v_{(-1)}.w\ts v_{(0)}$$
for $v,w\in V$. 
\par
As $V$ is a Yetter-Drinfel'd module with the left module structure given by the adjoint action and the induced left comodule structure, the $H$-module and $H$-comodule structures on $V^{\ts n}$ are given by: for any $h\in H$ and $v^1,\cdots,v^n\in V$,
$$h.(v^1\ts\cdots\ts v^n)=\sum\textrm{ad}(h_{(1)})(v^1)\ts\cdots \ts \textrm{ad}(h_{(n)})(v^n),$$
$$\delta_L(v^1\ts\cdots\ts v^n)=\sum v^1_{(-1)}\cdots v^n_{(-1)}\ts v_{(0)}^1\ts\cdots\ts v_{(0)}^n.$$
\par
We let $\textbf{M}_{pq}$ denote the restriction of $f_{pq}$ on $V^{\ts p}\uts V^{\ts q}$, as $f_{pq}$ is a right $H$-comodule morphism, the image of $\textbf{M}_{pq}$ is contained in $V$; this gives a family of linear maps 
$$\textbf{M}_{pq}:V^{\ts p}\uts V^{\ts q}\ra V$$
and 
$$\textbf{M}=\bigoplus_{p,q\geq 0} \textbf{M}_{pq}:T(V)\uts T(V)\ra V.$$
\par

In \cite{JR12}, the authors introduced the notion of a quantum multi-brace algebra with the aim of constructing quantizations of quasi-shuffle algebras and cofree Hopf algebras. The following theorem implies that the multi-brace algebra structure can be recovered from a graded pair $(f,g)$ by taking coinvariants. Moreover, this multi-brace condition characterizes the graded pairs from those with property (\textbf{MB}). 

\begin{theorem}\label{Thm:multi-brace}
The triple $(V,\textbf{M},\s)$ is a quantum multi-brace algebra.
\end{theorem}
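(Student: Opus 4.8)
The plan is to verify that the family $\textbf{M}=\bigoplus_{p,q}\textbf{M}_{pq}$ satisfies the defining axioms of a quantum multi-brace algebra from \cite{JR12}, namely the normalization conditions $\textbf{M}_{10}=\textbf{M}_{01}=\id_V$ (together with $\textbf{M}_{00}=0$ and $\textbf{M}_{p0}=\textbf{M}_{0q}=0$ for $p,q\neq 1$), the compatibility with the braiding $\s$, and the associativity identity
$$\sum_{n=1}^{i+j}\textbf{M}_{nk}(\textbf{M}^{\ts n}\Delta_T^{(n)}\uts\id^{\ts k})=\sum_{m=1}^{j+k}\textbf{M}_{im}(\id^{\ts i}\uts\textbf{M}^{\ts m}\Delta_T^{(m)})$$
on $V^{\ts i}\uts V^{\ts j}\uts V^{\ts k}$, where $\Delta_T$ is the deconcatenation on $T(V)$. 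The first point is immediate: on coinvariants the structural maps $a_L,a_R$ restrict to the identity by the very definition of $V=M^{coR}$ and the Yetter--Drinfel'd structure, and $f_{0n}=f_{n0}=0$ for $n\neq 1$ is part of Definition \ref{Def:MB}. The braiding compatibility is exactly the statement that each $\textbf{M}_{pq}$ is a morphism in ${}^H_H\mathcal{YD}$, which follows from $f_{pq}$ being an $H$-Hopf bimodule morphism (so that $V^{\ts p}\ts V^{\ts q}\to V$ respects both the adjoint action and the induced coaction), combined with the standard fact that a morphism of Yetter--Drinfel'd modules commutes with $\s$.

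The heart of the proof is deriving the associativity identity for $\textbf{M}$ from condition (4) of Proposition \ref{Lem:equiv}, i.e.\ the $f$-associativity $\sum_n f_{nk}(f^{\ts n}\Delta^{(n)}\uts\id^{\ts k})=\sum_m f_{im}(\id^{\ts i}\uts f^{\ts m}\Delta^{(m)})$ on $M^{\Box i}\uts M^{\Box j}\uts M^{\Box k}$. The strategy is to restrict this identity to the sub-object $V^{\ts i}\uts V^{\ts j}\uts V^{\ts k}\subset M^{\Box i}\uts M^{\Box j}\uts M^{\Box k}$ of triple coinvariants and check that it becomes precisely the multi-brace identity. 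Here one must track two things carefully: first, that the deconcatenation coproduct $\Delta^{(n)}$ on $M^{\Box\bullet}$, when restricted to coinvariants, matches the deconcatenation $\Delta_T^{(n)}$ on $T(V)$ up to the comodule "legs" that get absorbed — this is where the Radford decomposition $T_H^c(M)\cong Q\ts H$ and the identification of $Q_\s(V)$ with the cotensor construction on $V$ enter; second, that applying $f^{\ts n}$ to an element of $\Box^n M$ sitting over a tuple of coinvariants lands back (after projecting to coinvariants, which is harmless since the image of $\textbf{M}_{pq}$ is already in $V$) in $V^{\ts n}$, so the iteration closes up. Because $f_{pq}$ factors through $\uts_H$ by condition (3), the mixed terms involving $H$-module actions telescope away on coinvariants, leaving only the genuinely multi-brace contributions.

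Concretely, I would argue as follows. Fix homogeneous $v\in V^{\ts i}$, $w\in V^{\ts j}$, $u\in V^{\ts k}$. In the left-hand side of (4), the inner map $f^{\ts n}\Delta^{(n)}$ applied to $v\uts w\in M^{\Box i}\uts M^{\Box j}$: since $\Delta^{(n)}$ is the iterated deconcatenation and $v\uts w$ lies over coinvariants, its components are (up to the bicomodule identifications that are transparent in the equalizer description of $\Box$) indexed by the $n$-part decompositions of the concatenation $v_1\cdots v_i w_1\cdots w_j$ into $i+j$ slots distributed among $n$ factors, with each factor a sub-word. Applying $f$ to each factor gives $\textbf{M}_{p'q'}$ on the corresponding sub-words, exactly reproducing $\textbf{M}^{\ts n}\Delta_T^{(n)}$ on $T(V)$. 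One then feeds the result into $f_{nk}=\textbf{M}_{nk}$ on coinvariants. Performing the symmetric analysis on the right-hand side and using that all terms where the internal $f$ hits an $H$-leg vanish (by the normalization $f_{0m}=0$, $m\neq1$, and the factoring through $\uts_H$), the identity (4) collapses to the asserted multi-brace associativity for $\textbf{M}$. The main obstacle I anticipate is the bookkeeping in this last paragraph — correctly matching the iterated coproducts on $\Box^\bullet M$ against those on $T(V)$ under the coinvariants functor, and making precise the claim that the $H$-valued and $H$-module pieces contribute trivially — rather than any conceptual difficulty; everything else is a direct unwinding of the definitions and of the Radford/bosonization correspondence already set up in Section \ref{Sec:Graded}.
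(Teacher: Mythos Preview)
Your overall strategy matches the paper's: verify the three axioms of a quantum multi-brace algebra, with (1) immediate from the normalization conditions on $f$, (2) from the Hopf bimodule property of $f_{pq}$ (the paper gives the explicit computation, but your appeal to $\M_{pq}$ being a morphism in ${}^H_H\mathcal{YD}$ is the same content), and (3) by restricting condition (4) of Proposition~\ref{Lem:equiv} to coinvariants.

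There is, however, a genuine gap in your treatment of (3). You state the target identity with $\Delta_T$, the ordinary deconcatenation on $T(V)$, and describe $\Delta^{(n)}(v\uts w)$ as giving ``$n$-part decompositions of the concatenation $v_1\cdots v_i w_1\cdots w_j$.'' Both statements are wrong. First, the coproduct being iterated is that of the \emph{tensor coalgebra} $T_H^c(M)\uts T_H^c(M)$, so $\Delta(v\uts w)=\sum(v_{(1)}\uts w_{(1)})\otimes(v_{(2)}\uts w_{(2)})$: the two tensorands stay separate via the vector-space flip, they are not concatenated. Second, and more seriously, the axiom you must verify is stated with the \emph{twisted} coproduct $\Delta_\beta$ (the paper's lemma writes it as $\Delta_\sigma$), not with $\Delta_T$. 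The braiding is not a nuisance term that vanishes: the summands of $f^{\ts n}\Delta^{(n)}$ in which some tensor factor of $f$ equals $f_{01}=a_L$ or $f_{10}=a_R$ are exactly what produce the $\sigma$-twist once everything is expressed on coinvariants. Your assertion that these ``$H$-leg'' contributions telescope away via the $\uts_H$-factorization is the error---they do not disappear, they reorganize into the braided coproduct. Already for $i=j=1$, $n=2$ one computes that $(f\otimes f)\Delta(v\uts w)$ contains $(v_{(-1)}.w)\otimes v_{(0)}$, which after the $\uts_H$-identifications accounts for the $\sigma$-term in $\Delta_\sigma$. So the obstacle you anticipated is not merely clerical: without correcting this, the identity you wrote down is simply not the defining identity of a quantum multi-brace algebra.
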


The rest of this subsection is devoted to giving a proof of this theorem.
\par
We recall the definition of a quantum multi-brace algebra (\cite{JR12}, Definition 4.9). Let $\chi_{ij}\in\mathfrak{S}_{i+j}$ be the permutation changing the first $i$ positions and the last $j$ positions by keeping their orders and $\beta_{ij}=T_{\chi_{ij}}$ where $T:\mc[\mathfrak{S}_n]\ra \mc[\mathfrak{B}_n]$ is the Matsumoto-Tits section (\cite{Ros98}).

\begin{definition}[\cite{JR12}]
A quantum multi-brace algebra $(V,\M,\s)$ is a braided vector space $(V, \s)$ equipped with an operation $\M=\bigoplus_{p,q\geq 0} {\M}_{pq}$, where ${\M}_{pq}:V^{\ts p}\ts V^{\ts q}\ra V$ for $p,q\geq 0$ satisfying
\begin{enumerate}
\item $\M_{00}=0$, $\M_{10}=\id_V=\M_{01}$, $\M_{n0}=0=\M_{0n}$ for $n\geq 2$;
\item for any $i,j,k\geq 1$,
$$\beta_{1k}({\M}_{ij}\otimes {\id}_V^{\otimes k})=(
{\id}_V^{\otimes k}\otimes {\M}_{ij})\beta_{i+j,k},\ \ 
\beta_{i1}({\id}_V^{\otimes i}\otimes
{\M}_{jk})=({\M}_{jk}\otimes {\id}_V^{\otimes i} )\beta_{i,j+k},
$$
\item for any triple $(i,j,k)$ of positive integers,
$$\sum_{r=1}^{i+j}{\M}_{rk}\circ ((\M^{\ts r}\circ\Delta_\beta^{(r-1)})\ts {\id}_{V}^{\ts k})=\sum_{l=1}^{j+k}{\M}_{il}\circ ({\id}_V^{\ts i}\ts (\M^{\ts l}\circ\Delta_\beta^{(l-1)})),$$
where $\Delta_\beta$ is the twisted coproduct (see Section 4 in \cite{JR12}).
\end{enumerate}
\end{definition}

For the condition (1), for any $n\neq 1$, $f_{00}=f_{n0}=f_{0n}=0$ implies that $\textbf{M}_{00}=\textbf{M}_{n0}=\textbf{M}_{0n}=0$; $\textbf{M}_{10}=\textbf{M}_{01}={\id}_V$ is clear as $f_{01}=a_L$ and $f_{10}=a_R$.
\par
Now we verify the second condition in the definition of a multi-brace algebra.

\begin{proposition}
For any $i,j,k\geq 1$, we have on $V^{\ts i}\uts V^{\ts j}\uts V^{\ts k}$:
$$\beta_{1k}(\M_{ij}\uts {\id}^{\ts k})=({\id}^{\ts k}\uts\M_{ij})\beta_{i+j,k},$$ 
$$\beta_{i1}({\id}^{\ts i}\uts\M_{jk})=(\M_{jk}\uts{\id}^{\ts i})\beta_{i,j+k}.$$
\end{proposition}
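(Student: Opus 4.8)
The plan is to recognize both identities as instances of the naturality of the braiding in the braided monoidal category ${}_H^H\mathcal{YD}$, so that beyond one bookkeeping identification no explicit manipulation of Matsumoto--Tits sections is needed. The first thing I would record is that $V^{\ts n}$ carries exactly the tensor-product Yetter--Drinfel'd structure of the $n$-fold tensor power of $V$ in ${}_H^H\mathcal{YD}$: by the formulas displayed just above the statement, its $H$-action is the diagonal adjoint action and its $H$-coaction is the codiagonal coaction, which are precisely the structure maps of $V^{\ts n}$ in that category. The elementary braiding $\s(v\ts w)=\sum v_{(-1)}.w\ts v_{(0)}$ is the categorical braiding $c_{V,V}$; in particular it satisfies the braid relation, so the Matsumoto--Tits section $T$ is well defined on $\mathfrak{S}_n$ and $\beta_{ij}=T_{\chi_{ij}}$ does not depend on the chosen reduced word. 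The structural point of Step~1 is that $\beta_{ij}$ agrees with the categorical braiding $c_{V^{\ts i},V^{\ts j}}\colon V^{\ts i}\ts V^{\ts j}\to V^{\ts j}\ts V^{\ts i}$ of ${}_H^H\mathcal{YD}$: choosing a reduced word for the block permutation $\chi_{ij}$ and applying the hexagon axioms repeatedly decomposes $c_{V^{\ts i},V^{\ts j}}$ into the corresponding product of elementary braidings $\s$ inserted at the appropriate positions, which is exactly $T_{\chi_{ij}}$. This is the fact underlying the quantum shuffle construction of \cite{Ros98}.

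Next I would check that $\M_{ij}\colon V^{\ts i}\ts V^{\ts j}\to V$ is a morphism in ${}_H^H\mathcal{YD}$. By (MB1) the map $f_{ij}\colon M^{\Box i}\uts M^{\Box j}\to M$ is an $H$-Hopf bimodule morphism; combining this with the descriptions of the $H$-module and $H$-comodule structures on the spaces of right coinvariants recalled before the statement, one sees that the restriction $\M_{ij}$ of $f_{ij}$ to $V^{\ts i}\uts V^{\ts j}$ — which, as already noted, lands in $V$ because $f_{ij}$ is a right $H$-comodule map — intertwines the diagonal adjoint actions and the codiagonal coactions. Hence $\M_{ij}$ is a morphism in ${}_H^H\mathcal{YD}$.

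Finally the two identities are read off from naturality of $c$. In any braided monoidal category a morphism $\phi\colon X\to Y$ and an object $W$ satisfy $c_{Y,W}(\phi\ts\id_W)=(\id_W\ts\phi)c_{X,W}$ and $c_{W,Y}(\id_W\ts\phi)=(\phi\ts\id_W)c_{W,X}$. Taking $\phi=\M_{ij}$, $X=V^{\ts i}\ts V^{\ts j}$, $Y=V$, $W=V^{\ts k}$, and using the identifications $\beta_{1k}=c_{V,V^{\ts k}}$ and $\beta_{i+j,k}=c_{V^{\ts(i+j)},V^{\ts k}}$ from Step~1, yields the first displayed equality; taking $\phi=\M_{jk}$, $X=V^{\ts j}\ts V^{\ts k}$, $Y=V$, $W=V^{\ts i}$ and the analogous identifications $\beta_{i1}=c_{V^{\ts i},V}$, $\beta_{i,j+k}=c_{V^{\ts i},V^{\ts(j+k)}}$ yields the second.

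I expect the main obstacle to be the identification in Step~1 that the combinatorial operator $\beta_{ij}=T_{\chi_{ij}}$ coincides with the categorical braiding $c_{V^{\ts i},V^{\ts j}}$: this rests on $\s$ obeying the braid relation (so that $T$ is well defined and reduced-word independent) and on the reduced expression for the block transposition $\chi_{ij}$ reassembling, through the hexagon identities, into $c_{V^{\ts i},V^{\ts j}}$ — both classical for Yetter--Drinfel'd modules but needing care with conventions. A secondary point requiring attention is the precise matching, in Step~2, between the $H$-Hopf bimodule structures on $M^{\Box i}\uts M^{\Box j}$ coming from the "external components" and the tensor-product Yetter--Drinfel'd structure on $V^{\ts i}\ts V^{\ts j}$ after passing to coinvariants. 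Once this dictionary is in place, Steps~2 and 3 are purely formal.
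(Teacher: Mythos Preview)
Your argument is correct and conceptually cleaner than the paper's, though both rest on the same underlying fact. The paper, like you, first observes that $\M_{pq}$ is an $H$-module and $H$-comodule morphism (being the restriction of the Hopf bimodule map $f_{pq}$ to coinvariants), but instead of invoking naturality of the categorical braiding it carries out the computation explicitly on elements: it writes out $\beta_{i+j,k}$ acting on $(u^1,\dots,u^i)\uts(v^1,\dots,v^j)\uts(w^1,\dots,w^k)$ via the Sweedler-notation formula for $\sigma$, applies $\id^{\ts k}\uts\M_{ij}$, and then matches this against the left-hand side by using the comodule-map property of $\M_{ij}$ to compute $\delta_L(\M_{ij}(\cdots))$. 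In effect the paper is re-proving, by hand and for this particular morphism, precisely the naturality statement you cite abstractly. Your route buys you a one-line proof once the identification $\beta_{ij}=c_{V^{\ts i},V^{\ts j}}$ is in place (and avoids any asymmetry between the two displayed identities), whereas the paper's route avoids having to justify that identification and the hexagon bookkeeping, at the cost of an element-by-element verification.
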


\begin{proof}
We show the first identity; the second one can be verified similarly.
\par
For any $p,q\geq 0$, $f_{pq}$ is a Hopf bimodule morphism; as its restriction on $V^{\ts p}\uts V^{\ts q}$, $\M_{pq}$ is an $H$-module and an $H$-comodule morphism where $V^{\ts p}\uts V^{\ts q}$ admits the adjoint $H$-module structure and the sub-H-comodule structure coming from $M^{\Box p}\uts M^{\Box q}$.
\par
For $i,j,k\geq 1$, we take $u^1,\cdots,u^i,v^1,\cdots,v^j,w^1,\cdots,w^k\in V$. To simplify notations, we let $(u^1,\cdots,u^i)$ denote the tensor product $u^1\ts\cdots\ts u^i$. Then the action of $\beta_{i+j,k}$ on 
$$(u^1,\cdots,u^i)\uts (v^1,\cdots,v^j)\uts (w^1,\cdots,w^k)\in V^{\ts i}\uts V^{\ts j}\uts V^{\ts k}$$
gives
\begin{eqnarray*}
& &\sum (u_{(-k)}^1\cdots u_{(-k)}^iv_{(-k)}^1\cdots v_{(-k)}^j.w^1,\cdots,u_{(-1)}^1\cdots u_{(-1)}^iv_{(-1)}^1\cdots v_{(-1)}^j.w^k)\uts\\
&\uts &(u_{(0)}^1,\cdots,u_{(0)}^i)\uts (v_{(0)}^1,\cdots,v_{(0)}^j),
\end{eqnarray*}
when $({\id}^{\ts k}\uts\M_{ij})$ is applied, we obtain
$$\sum (u_{(-1)}^1\cdots u_{(-1)}^iv_{(-1)}^1\cdots v_{(-1)}^j\cdot (w^1,\cdots,w^k))\uts \M_{ij}((u_{(0)}^1,\cdots,u_{(0)}^i)\uts (v_{(0)}^1,\cdots,v_{(0)}^j)).$$
For the left hand side, we compute
\begin{eqnarray*}
& & \delta_L(\M_{ij}((u^1,\cdots,u^i)\uts (v^1,\cdots,v^j))\\
&=& ({\id}\ts\M_{ij})\delta_L((u^1,\cdots,u^i)\uts (v^1,\cdots,v^j))\\
&=& \sum u_{(-1)}^1\cdots u_{(-1)}^iv_{(-1)}^1\cdots v_{(-1)}^j \uts \M_{ij}((u_{(0)}^1,\cdots,u_{(0)}^i)\uts (v_{(0)}^1,\cdots,v_{(0)}^j)).
\end{eqnarray*}
According to the explicit formula of the braiding, the left hand side gives the same result as the right one.
\end{proof}

The last condition on the multi-brace algebra comes from the following lemma:

\begin{lemma}
For any $i,j,k\geq 1$, on $V^{\ts i}\uts V^{\ts j}\uts V^{\ts k}$,
$$\sum_{n=1}^{i+j}\M_{nk}(\M^{\ts n}\Delta_\s^{(n)}\uts {\id}^{\ts k})=\sum_{m=1}^{j+k}\M_{im}({\id}^{\ts i}\uts \M^{\ts m}\Delta_\s^{(m)}).$$
\end{lemma}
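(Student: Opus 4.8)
The plan is to deduce the multi-brace identity in the statement directly from the associativity condition (MB3) of the pair $(f,g)$, by restricting to coinvariants. Concretely, (MB3)$_{ijk}$ for $i,j,k\geq 1$ reads
$$\sum_{n=1}^{i+j}f_{nk}(f^{\ts n}\Delta^{(n)}\uts {\id}^{\ts k})=\sum_{m=1}^{j+k}f_{im}({\id}^{\ts i}\uts f^{\ts m}\Delta^{(m)})$$
as maps $M^{\Box i}\uts M^{\Box j}\uts M^{\Box k}\ra M$ (the $g$-terms vanish because the pair is graded). The first step is to restrict both sides to the subspace $V^{\ts i}\uts V^{\ts j}\uts V^{\ts k}\subset M^{\Box i}\uts M^{\Box j}\uts M^{\Box k}$, where $V=M^{coR}$, and to identify $f_{pq}|_{V^{\ts p}\uts V^{\ts q}}$ with $\M_{pq}$ (with image in $V$, as already noted before the lemma).

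The heart of the argument is to show that, once restricted to coinvariants, the deconcatenation-type comultiplication $\Delta^{(n)}$ appearing inside $f^{\ts n}\Delta^{(n)}$ becomes exactly the twisted coproduct $\Delta_\s^{(n)}$ of the quantum multi-brace formalism. The point is that $\Delta$ on $T_H^c(M)$ is the deconcatenation on the cotensor powers $M^{\Box n}$, but $M^{\Box n}$ is \emph{not} $V^{\ts n}\ts H$ naively: passing through the Radford decomposition $T_H^c(M)\cong Q\ts H$ and projecting to $Q$ (equivalently, pushing a tensor $v^1\ts\cdots\ts v^n$ of coinvariants through $\Delta^{(n)}$ and then back down to $V^{\ts *}$) produces precisely the braiding corrections encoded by the elements $\beta_{ij}$, i.e. the twisted coproduct $\Delta_\s$ of \cite{JR12}, Section 4. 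I would establish this by induction on $n$: the case $n=1$ is trivial, and the inductive step compares $({\id}^{\ts n-2}\ts\Delta)\Delta^{(n-1)}$ on $V^{\ts n}$ with the corresponding recursion for $\Delta_\s^{(n)}$, using the explicit comodule structure $\delta_L(v^1\ts\cdots\ts v^n)=\sum v^1_{(-1)}\cdots v^n_{(-1)}\ts v^1_{(0)}\ts\cdots\ts v^n_{(0)}$ together with the formula $\s(v\ts w)=\sum v_{(-1)}.w\ts v_{(0)}$ for the braiding. Here one uses crucially that each $f_{pq}$, hence each $\M_{pq}$, is an $H$-comodule morphism, so that the "internal $H$-part" produced by $\Delta$ acts on the remaining coinvariant tensor factors via the adjoint action, which is exactly how $\beta_{ij}$ shuffles the braidings past one another.

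With that identification in hand, applying $f^{\ts n}$ to $\Delta^{(n)}$ restricted to $V$-tensors becomes $\M^{\ts n}\circ\Delta_\s^{(n)}$ (again using that each component lands in $V$), and similarly on the right-hand side. Thus (MB3)$_{ijk}$ restricted to $V^{\ts i}\uts V^{\ts j}\uts V^{\ts k}$ becomes verbatim
$$\sum_{n=1}^{i+j}\M_{nk}(\M^{\ts n}\Delta_\s^{(n)}\uts {\id}^{\ts k})=\sum_{m=1}^{j+k}\M_{im}({\id}^{\ts i}\uts \M^{\ts m}\Delta_\s^{(m)}),$$
which is the claim. Combined with the already-verified conditions (1) and (2) of the definition of a quantum multi-brace algebra, this completes the proof of Theorem \ref{Thm:multi-brace}.

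\textbf{Main obstacle.} The delicate point is the bookkeeping in the inductive identification $\Delta^{(n)}|_V \leftrightarrow \Delta_\s^{(n)}$: one must keep careful track of which tensor slots the intermediate $H$-group-like (comodule) elements act on, and check that the combinatorics of the iterated deconcatenation, after pushing the $H$-part through the $H$-module maps $f_{pq}$, reproduces exactly the Matsumoto--Tits sections $\beta_{ij}$ defining $\Delta_\s$ in \cite{JR12}. This is where the fact that $\M$ lands in $V$ and that every $\M_{pq}$ is simultaneously an $H$-module and $H$-comodule morphism is used in an essential way, and it is the step most likely to require the full detailed computation rather than a one-line invocation.
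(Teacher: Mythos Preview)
Your approach is the paper's approach: the paper's entire proof is the single sentence ``This lemma comes directly from the condition (4) in Proposition \ref{Lem:equiv},'' and you have correctly identified that condition (4) is (MB3)$_{ijk}$ with the $g$-terms removed, that restricting $f_{pq}$ to coinvariants gives $\M_{pq}$, and that the remaining work is to match $\Delta^{(n)}$ on $T_H^c(M)\uts T_H^c(M)$ with the twisted coproduct $\Delta_\sigma^{(n)}$ after passing to $V$. The paper leaves this last identification implicit (it is essentially the standard fact, built into Radford's construction, that under the bosonization isomorphism $T_H^c(M)\cong Q\#H$ the ordinary tensor-product coalgebra structure on $T_H^c(M)\uts T_H^c(M)$ restricts on coinvariants to the braided tensor coalgebra structure), so your explicit plan to verify it by induction, using the $H$-comodule compatibility of the $\M_{pq}$, is exactly the content hidden behind the word ``directly.''
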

Here $\Delta_\s:Q_\s(V)\ra Q_\s(V)\ts Q_\s(V)$ is the twisted coproduct in the braided Hopf algebra $Q_\s(V)$. This lemma comes directly from the condition (4) in Proposition \ref{Lem:equiv}.
\par
This terminates the proof of the theorem.

\subsection{Quantum quasi-symmetric algebras}\label{Sec:qshuffle}
Apart from the one of Nichols type, the simplest pair of multi-brace type is the quasi-symmetric one defined below.

\begin{definition}\label{Def:7}
A pair $(f,g)$ is called of quasi-symmetric type if 
\begin{enumerate}
\item $f_{01}=a_L$, $f_{10}=a_R$, $f_{11}=\alpha$ and for any $(p,q)\neq (0,1), (1,0), (1,1)$, $f_{pq}=0$ where $a_L:H\uts M\ra M$ (resp. $a_R:M\uts H\ra M$) is the left (right) module structural map of $M$;
\item The pair $(f,g)$ has property (\textbf{MB}) and is graded.
\end{enumerate}
\end{definition}

The following corollary is clear from Proposition \ref{Lem:equiv}. 

\begin{corollary}\label{Cor:qsym}
Let $(f,g)$ be a pair satisfying (1) as in Definition \ref{Def:7}. Then it is of quasi-symmetric type if and only if $f_{11}=\alpha:M\uts M\ra M$ satisfies:
\begin{enumerate}
\item $\alpha$ passes through $M\uts_H M$ and is a Hopf bimodule morphism;
\item $\alpha$ is associative: $\alpha(\alpha\ts \id)=\alpha(\id\ts\alpha):M\uts_H M\uts_H M\ra M$.
\end{enumerate}
\end{corollary}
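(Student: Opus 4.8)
The plan is to specialize the equivalent description of multi-brace type pairs in Proposition \ref{Lem:equiv} to the case where $f$ is supported only on bidegrees $(0,1)$, $(1,0)$ and $(1,1)$. Conditions (1) and (2) of that proposition are part of the hypothesis (1) in Definition \ref{Def:7}, so the content of the corollary is to unpack what conditions (3) and (4) of Proposition \ref{Lem:equiv} say when the only nonzero $f_{pq}$ with $p,q>0$ is $f_{11}=\alpha$. First I would treat condition (3): it states that every $f_{pq}$ with $p,q>0$ factors through $M^{\Box p}\uts_H M^{\Box q}$ and is a Hopf bimodule morphism; since $f_{pq}=0$ for $(p,q)\neq(1,1)$ among such pairs, this reduces exactly to the assertion that $\alpha\colon M\uts M\to M$ passes through $M\uts_H M$ and is a Hopf bimodule morphism, which is item (1) of the corollary.

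Next I would handle condition (4), the associativity identity $\sum_{n=1}^{i+j}f_{nk}(f^{\ts n}\Delta^{(n)}\uts {\id}^{\ts k})=\sum_{m=1}^{j+k}f_{im}({\id}^{\ts i}\uts f^{\ts m}\Delta^{(m)})$ on $M^{\Box i}\uts M^{\Box j}\uts M^{\Box k}$ for $i,j,k\geq 1$. The key observation is that $f_{nk}$ vanishes unless $n=k=1$ (or one index is $0$, but $i,j,k\geq1$ forces $n\geq 1$), and likewise $f_{im}$ vanishes unless $i=m=1$; moreover $f^{\ts n}\Delta^{(n)}$ restricted to $M^{\Box i}\uts M^{\Box j}$ lands in $M^{\Box n}$, so for the $n=1$ term on the left we need $i+j$ tensor factors to be collapsed by $f^{\ts 1}\Delta^{(1)}=f$ — but $f$ is degree-lowering only in the sense recorded in the Remark after Proposition \ref{Prop:1}. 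The upshot is that the only nonvanishing instance of the identity is $(i,j,k)=(1,1,1)$, where the left side is $f_{11}(f_{11}\uts{\id})=\alpha(\alpha\ts{\id})$ and the right side is $f_{11}({\id}\uts f_{11})=\alpha({\id}\ts\alpha)$, both regarded as maps $M\uts_H M\uts_H M\to M$ using item (1). This gives item (2) of the corollary, and conversely items (1)–(2) clearly imply conditions (3)–(4) restricted to the allowed bidegrees.

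The main obstacle — really the only point requiring care rather than bookkeeping — is verifying that for $i,j,k\geq 1$ with $(i,j,k)\neq(1,1,1)$ every term in condition (4) genuinely vanishes, i.e.\ that no nonzero contribution can arise from $f^{\ts n}\Delta^{(n)}$ landing in $M^{\Box 1}$ when $i+j>2$. This follows because, on $M^{\Box i}\uts M^{\Box j}$, the summand $f^{\ts n}\Delta^{(n)}$ with $n<i+j$ involves applying the external module maps $f_{01}=a_L$, $f_{10}=a_R$ on some factors and only a single $f_{11}=\alpha$ is available to merge two adjacent $M$-factors into one; merging all $i+j$ factors into a single element of $M$ is impossible when $i+j\geq 3$ since each application of $a_L,a_R,\alpha$ reduces the $M$-degree by at most one and $a_L,a_R$ require an $H$-factor that is absent here. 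I would spell this out by a short induction on $i+j$, or simply by invoking the combinatorial description of $f^{\ts n}\Delta^{(n)}$ already used in the proof of Theorem \ref{Thm:multi-brace}. Once that vanishing is established, the corollary is immediate from Proposition \ref{Lem:equiv}.
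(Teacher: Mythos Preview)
Your proposal is correct and follows exactly the approach the paper takes: the paper simply states that the corollary is clear from Proposition \ref{Lem:equiv}, and you are spelling out that specialization. One small simplification: your ``main obstacle'' paragraph is more elaborate than necessary, since once you note that $f_{nk}=0$ for $n\geq 1$, $k\geq 1$ unless $n=k=1$, the only surviving left-hand term is $f_{11}\bigl(f^{\otimes 1}\Delta^{(1)}\uts\id\bigr)=f_{11}(f_{ij}\uts\id)$, and $f_{ij}=0$ for $(i,j)\neq(1,1)$ is literally part of hypothesis (1) in Definition \ref{Def:7}---no induction or merging argument is needed.
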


If $(f,g)$ is a pair of quasi-symmetric type, we call the Hopf algebra $Q_H(M)$ associated to this pair a quantum quasi-symmetric algebra and the braided Hopf algebra $Q_\s(V)$ a quantum quasi-shuffle algebra. According to Theorem \ref{Thm:multi-brace}, it coincides with the sub-algebra generated by $V$ of the quantum quasi-shuffle algebra constructed in \cite{JR12} and \cite{JRZ11}. The Hopf algebra $Q_H(M)$ is isomorphic to the bosonization of $Q_\s(V)$ and $H$. 
\par
Let $\ast$ denote the multiplication in $Q_H(M)$. We give the explicit formula of the multiplication in the quantum quasi-symmetric algebras. As $Q_H(M)\cong Q_\s(V)\# H$ is the bosonization of $Q_\s(V)$ and $H$, the multiplication between $Q_\s(V)$ and $H$ is given by (see Section 1.5 in \cite{AS02}): for $x,y\in Q_\s(V)$ and $s,t\in H$,
$$(x\ts s)\ast (y\ts t)=\sum x\ast(s_{(1)}\cdot y)\ts s_{(2)}t.$$
The explicit formula of the multiplication in $Q_\s(V)$ can be expressed inductively according to Proposition 3 in \cite{JRZ11}. We will not copy the whole formula but write down a particular case which will be useful later: for $u,v\in V$, 
$$u\ast v=u\ts v+\s(v\ts u)+\alpha(u\ts v).$$
Moreover, if $\alpha=0$, the quantum quasi-symmetric algebra reduces to the quantum symmetric algebra and the multiplication formula is given in \cite{Ros98}.

\section{Quantum groups as quantum quasi-symmetric algebras}\label{Sec:main}

For convenience, we will use $\ts$ instead of $\underline{\ts}$ if there is no ambiguity. 

\subsection{Construction}\label{Sec:construction}

In this section, we will construct the entire quantum group as a quotient of a quantum quasi-symmetric algebra.
\par
Let $\g$ be a symmetrizable Kac-Moody Lie algebra of rank $n$ with generalized Cartan matrix $C=(c_{ij})_{n\times n}$ and $A=DC=(a_{ij})_{n\times n}$ its symmetrization with $D=\textrm{diag}(d_1,\dots,d_n)$. Let $q\in\mc^*$ not be a root of unity and $q_i=q^{d_i}$. We let $I$ denote the index set $\{1,\cdots,n\}$. 
\par
Let $H=\mc[K_1^{\pm 1},\cdots,K_n^{\pm 1}]$ be the group algebra of the additive group $\mathbb{Z}^n$. Let $W$ be the vector space generated by $\{E_i,F_i,\xi_i|\ i\in I\}$ and $M=W\ts H$.
\par
We consider the following $H$-Hopf bimodule structure on $M$: 
\begin{enumerate}
\item The right module structure arises from the multiplication in $H$; the right comodule structure comes from the tensor product of two right comodules $W$ and $H$, where the structure on $H$ is given by comultiplication and the one on $W$ is trivial (for any $w\in W$, $\delta_R(w)=w\ts 1$).
\item The left structures are given in the following way: on $W$, for any $i,j\in I$,
$$K_i.E_j=q^{a_{ij}}E_j,\ \ K_i.F_j=q^{-a_{ij}}F_j,\ \ K_i.\xi_j=\xi_j;$$
$$\delta_L(E_i)=K_i\ts E_i,\ \ \delta_L(F_i)=K_i\ts F_i,\ \ \delta_L(\xi_i)=K_i^2\ts \xi_i,$$
then we take on $M=W\ts H$ the structure arising from the tensor product.
\end{enumerate}
\par
We use the following abuse of notation: for $x\in W$ and $K\in H$, we shall write $x$ for $x\ts 1$ in $M$, and $xK$ for $x\ts K$ in $M$.
\par
We define $\alpha:M\ts M\ra M$ by: for any $K,K'\in H$, if $\lambda$ is the constant such that $K.F_j=\lambda F_j$,
$$\alpha(E_iK\ts F_jK')=\delta_{ij}\frac{\lambda\xi_iKK'}{q_i-q_i^{-1}};$$
and on any other elements not of the above form, $\alpha$ gives $0$.
We verify that $\alpha$ satisfies the conditions posed on $f_{11}$ for a pair of quasi-symmetric type.
\begin{enumerate}
\item $\alpha$ factorizes through $M\ts_H M$. It suffices to deal with its evaluation on $E_iK\ts F_jK'$ as in the definition of $\alpha$:
$$\alpha(E_i\ts K.(F_jK'))=\lambda\alpha(E_i\ts F_jKK')=\delta_{ij}\frac{\lambda\xi_iKK'}{q_i-q_i^{-1}}=\alpha(E_iK\ts F_jK').$$
\item $\alpha$ is an $H$-bimodule morphism. The right structure is clear. For the left module structure, we take some $K_p\in H$ for $p\in I$, then
$$\alpha(K_p\cdot (E_iK\ts F_jK'))=\alpha((K_p\cdot E_i)K_pK\ts F_jK')=\delta_{ij}\frac{\xi_i q^{a_{pi}-a_{pj}}\lambda K_pKK'}{q_i-q_i^{-1}}=\delta_{ij}\frac{\xi_i\lambda K_pKK'}{q_i-q_i^{-1}},$$
which coincides with $K_p\cdot \alpha(E_iK\ts F_jK')$.
\item $\alpha$ is an $H$-bicomodule morphism. We may assume that $K$ is a monomial, then $\Delta(K)=K\ts K$. For the left comodule structure,
$$(\id\ts\alpha)\delta_L: E_iK\ts F_jK'\mapsto \sum K_iK_jKK_{(1)}'\ts \frac{\delta_{ij}\lambda\xi_iKK_{(2)}'}{q_i-q_i^{-1}},$$
$$\delta_L: \delta_{ij}\lambda\xi_iKK'\mapsto \sum K_i^2KK_{(1)}'\ts\delta_{ij}\lambda\xi_iKK_{(2)}'.$$
For the right comodule structure,
$$(\alpha\ts\id)\delta_R:E_iK\ts F_jK'\mapsto \sum \frac{\delta_{ij}\lambda\xi_iKK_{(1)}'}{q_i-q_i^{-1}}\ts KK_{(2)}',$$
$$\delta_R: \delta_{ij}\lambda\xi_iKK'\mapsto \sum \delta_{ij}\lambda\xi_iKK_{(1)}'\ts KK_{(2)}'.$$
\item $\alpha$ is associative. This is clear from definition.
\end{enumerate}

According to Corollary \ref{Cor:qsym}, taking $f_{11}=\alpha$, $f_{10}=a_R$ and $f_{01}=a_L$ gives a Hopf algebra $Q_H(M)$ whose multiplication will be denoted by $\ast$. We consider the ideal $J$ of $Q_H(M)$ generated by $\{\xi_i-K_i^2+1|\ i\in I\}$, then $J$ is a Hopf ideal. Indeed, the coproduct on $\xi_i-K_i^2+1$ gives
$$\Delta(\xi_i-K_i^2+1)=K^2\ts (\xi_i-K_i^2+1)+(\xi_i-K_i^2+1)\ts 1.$$
We define $U_H(M)=Q_H(M)/J$, then $U_H(M)$ is a Hopf algebra.

\begin{remark}
The left comodule structure on $\xi_i$ is uniquely determined by $\alpha$ and the comodule structures on $E_i$, $F_i$.
\end{remark}

\subsection{Relation with quantum groups}\label{Sec:QGrp}
We show in this subsection that $U_H(M)$ is isomorphic as a Hopf algebra to the quantum group $U_q(\g)$.
\par
We start by constructing a Hopf algebra morphism from the quantum group $U_q(\g)$ to $U_H(M)$.
\par
Let $X$ be the vector space generated by $\{e_i,f_i,t_i^{\pm 1}|\ i\in I\}$. Then the quantum group $U_q(\g)$ is a quotient of the tensor algebra $T(X)$ by an ideal $R$ generated by the well-known relations. This quotient admits a unique Hopf algebra structure determined by 
$$\Delta(e_i)=t_i\ts e_i+e_i\ts 1,\ \ \Delta(f_i)=f_i\ts t_i^{-1}+1\ts f_i,\ \ \Delta(t_i^{\pm 1})=t_i^{\pm 1}\ts t_i^{\pm 1},$$
$$\ve(e_i)=0,\ \ \ve(f_i)=0,\ \ \ve(t_i^{\pm 1})=1$$
for any $i\in I$.
\par
We define a linear map $\wt{\Phi}:X\ra U_H(M)$ by
$$e_i\mapsto E_i,\ \ f_i\mapsto F_i\ast K_i^{-1}=F_iK_i^{-1},\ \ t_i^{\pm 1}\mapsto K_i^{\pm 1}.$$
The universal property of the tensor algebra gives an algebra morphism $\Phi:T(X)\ra U_H(M)$.

\begin{proposition}
The algebra morphism $\Phi$ passes through $T(X)/R$ and gives an algebra morphism $\vp:U_q(\g)\ra U_H(M)$.
\end{proposition}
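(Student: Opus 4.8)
The plan is to show that $\Phi$ kills every generator of the defining ideal $R$ of $U_q(\g)$, i.e.\ that the images under $\Phi$ of the Drinfel'd--Jimbo relations hold in $U_H(M)$. The relations come in four families: (a) the torus relations $t_it_j=t_jt_i$ and $t_it_i^{-1}=1$; (b) the commutation relations $t_ie_jt_i^{-1}=q_i^{c_{ij}}e_j$ and $t_if_jt_i^{-1}=q_i^{-c_{ij}}f_j$; (c) the key relation $e_if_j-f_je_i=\delta_{ij}\dfrac{t_i-t_i^{-1}}{q_i-q_i^{-1}}$; (d) the two $q$-Serre relations. Since $\Phi$ is already an algebra morphism, it suffices to verify each relation after applying $\Phi$, and the main substance is (c), while (d) follows from the quantum shuffle description of the positive and negative parts.

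First I would dispose of (a) and (b). The elements $K_i^{\pm 1}$ lie in $H\subset Q_H(M)$ with the original commutative multiplication, so $\Phi$ of (a) is immediate. For (b), using the bosonization multiplication formula $(x\ts s)\ast(y\ts t)=\sum x\ast(s_{(1)}\cdot y)\ts s_{(2)}t$ together with the left $H$-action $K_i.E_j=q^{a_{ij}}E_j=q_i^{c_{ij}}E_j$ (since $a_{ij}=d_ic_{ij}$) and $K_i.F_j=q^{-a_{ij}}F_j$, one gets $K_i\ast E_j=q_i^{c_{ij}}E_j\ast K_i$ and $K_i\ast F_j=q_i^{-c_{ij}}F_j\ast K_i$; conjugating and using $f_j\mapsto F_jK_j^{-1}$ gives exactly the relations in (b). Next, for (c) the crucial computation is $E_i\ast (F_jK_j^{-1})$ versus $(F_jK_j^{-1})\ast E_i$. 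Using the quasi-shuffle formula $u\ast v=u\ts v+\s(v\ts u)+\alpha(u\ts v)$ for $u=E_i$, $v=F_j$, and then multiplying on the right by $K_j^{-1}$ and moving the torus factors past via (b), the "tensor" and "braiding" terms cancel between the two orders (the braiding $\s$ is precisely what accounts for the $K$-twist), leaving only the $\alpha$-terms. By the definition of $\alpha$, $\alpha(E_i\ts F_j)=\delta_{ij}\dfrac{\xi_i}{q_i-q_i^{-1}}$ (here $\lambda=1$ since $K=1$), while the reversed order contributes $\alpha(F_j\ts E_i)=0$. Hence $\Phi(e_if_j-f_je_i)=\delta_{ij}\dfrac{\xi_iK_i^{-1}}{q_i-q_i^{-1}}$ up to bookkeeping with the $K_i^{-1}$; in $U_H(M)=Q_H(M)/J$ we have $\xi_i=K_i^2-1$, so this becomes $\delta_{ij}\dfrac{(K_i^2-1)K_i^{-1}}{q_i-q_i^{-1}}=\delta_{ij}\dfrac{K_i-K_i^{-1}}{q_i-q_i^{-1}}$, which is $\Phi$ of the right-hand side of (c). I expect the careful tracking of the $K_i^{\pm 1}$ factors and the sign/normalization here to be the main obstacle — everything hinges on checking that the passage to the quotient by $J$ converts the "dummy" $\xi_i$ into exactly $K_i^2-1$ at the right spot.

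Finally, for the $q$-Serre relations (d): the subalgebra of $Q_H(M)$ generated by $\{E_i\}$ is, after taking coinvariants, the quantum shuffle algebra $Q_\s(V^+)$ on the braided subspace $V^+$ spanned by the $E_i$, with braiding $\s(E_i\ts E_j)=q^{a_{ij}}E_j\ts E_i = q_i^{c_{ij}}E_j\ts E_i$ (note $\alpha$ vanishes on $V^+\ts V^+$, so this piece is a genuine quantum symmetric algebra as in the Nichols-type case). By Rosso's theorem \cite{Ros98} the quantum symmetric algebra on this braided vector space has the $q$-Serre relations among its relations, so $\Phi$ of the positive Serre relations vanishes; the negative ones follow the same way using $f_i\mapsto F_iK_i^{-1}$ and the corresponding braided subspace, the $K_i^{-1}$ twist being absorbed into the braiding exactly as in the standard triangular-decomposition argument. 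Since $\Phi$ annihilates all generators of $R$, it factors through $T(X)/R=U_q(\g)$, yielding the algebra morphism $\vp:U_q(\g)\ra U_H(M)$; that $\vp$ respects the coproduct is checked on the generators $e_i,f_i,t_i^{\pm1}$ using the given formula for $\Delta$ on $U_q(\g)$ and the comodule structures $\delta_L(E_i)=K_i\ts E_i$, $\delta_L(F_i)=K_i\ts F_i$ defining the coproduct on $Q_H(M)$.
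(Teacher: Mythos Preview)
Your proposal is correct and follows essentially the same route as the paper: verify the torus relations trivially, the $K_i$--$E_j$/$F_j$ commutations via the left $H$-action, the commutator $[e_i,f_j]$ via the quasi-shuffle formula (where the degree-two terms cancel and only the $\alpha$-contribution $\delta_{ij}\xi_i/(q_i-q_i^{-1})$ survives, becoming $(K_i-K_i^{-1})/(q_i-q_i^{-1})$ after quotienting by $J$), and the Serre relations by observing that $\alpha$ vanishes on the positive and negative pieces so those subalgebras are ordinary quantum shuffle algebras, whence Lemma~14 of \cite{Ros98} applies. The only cosmetic differences are that the paper writes out the two products $E_i\ast F_j\ast K_j^{-1}$ and $F_j\ast K_j^{-1}\ast E_i$ explicitly as elements of $M^{\Box 2}\oplus H$ rather than invoking the abstract cancellation, and handles the negative Serre relations by the remark that the $f_it_i^{-1}$ satisfy the same Serre relations as the $f_i$; your closing comment on compatibility with the coproduct is not part of this proposition but is exactly what the paper records immediately afterward.
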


\begin{proof}
We verify that $\Phi$ sends all generators of $R$ to zero. The computation below can be obtained by applying explicit formulas for the multiplication given in Section \ref{Sec:qshuffle}.
\par
It is clear that
$$\Phi(t_it_j-t_jt_i)=K_i\ast K_j-K_j\ast K_i=0,\ \ \Phi(t_it_i^{-1}-1)=K_i\ast K_i^{-1}-1=0.$$
We show that $\Phi(t_ie_j-q^{a_{ij}}e_jt_i)=K_i\ast E_j-q^{a_{ij}}E_j\ast K_i=0$ and $\Phi(t_if_j-q^{-a_{ij}}f_jt_i)=K_i\ast F_j\ast K_j^{-1}-q^{-a_{ij}}F_j\ast K_j^{-1}\ast K_i=0$. Indeed,
$$K_i\ast E_j=q^{a_{ij}}E_j\ts K_i,\ \ E_j\ast K_i=E_j\ts K_i,$$
$$K_i\ast F_j\ast K_j^{-1}=q^{-a_{ij}}F_j\ts K_iK_j^{-1},\ \ F_j\ast K_j^{-1}\ast K_i=F_j\ts K_iK_j^{-1}.$$
We consider the commutation relation 
$$\Phi\left(e_if_j-f_je_i-\delta_{ij}\frac{t_i-t_i^{-1}}{q_i-q_i^{-1}}\right)=E_i\ast F_j\ast K_j^{-1}-F_j\ast K_j^{-1}\ast E_i-\delta_{ij}\frac{K_i-K_i^{-1}}{q_i-q_i^{-1}}.$$
After an easy calculation using the algebra structure on $Q_H(M)$, we have:
$$E_i\ast F_j\ast K_j^{-1}=\delta_{ij}\xi_iK_j^{-1}+q^{-a_{ij}}F_jK_iK_j^{-1}\ts E_iK_j^{-1}+E_i\ts F_jK_j^{-1},$$
$$F_j\ast K_j^{-1}\ast E_i=E_i\ts F_jK_j^{-1}+q^{-a_{ij}}F_jK_iK_j^{-1}\ts E_iK_j^{-1},$$
where we used the fact that $A$ is a symmetric matrix. Then 
$$E_i\ast F_j\ast K_j^{-1}-F_j\ast K_j^{-1}\ast E_i=\delta_{ij}\xi_iK_j^{-1}=\delta_{ij}\frac{K_i-K_i^{-1}}{q_i-q_i^{-1}}.$$
\par
It remains to deal with the quantum Serre relations. We let $Q_H^{>0}(M)$ denote the subalgebra of $Q_H(M)$ generated by $W_+=\{E_i|\ i\in I\}$, then $M_+=W_+\ts H$ is a sub-$H$-Hopf bimodule of $M=W\ts H$ and the restriction of $\alpha=f_{11}$ on $M_+$ is zero. This shows that when restricted to $Q_H^{>0}(M)$, the quantum quasi-shuffle product degenerates to the quantum shuffle product. According to Lemma 14 in \cite{Ros98}, the quantum Serre relations on $\{e_i|\ i\in I\}$ are sent to zero by $\Phi$.
\par
The same argument can be applied to the quantum Serre relations on $f_i$ since $f_it_i^{-1}$ satisfy the same quantum Serre relations as $f_i$.
\par
Then $\Phi$ annihilates all defining relations in $U_q(\g)$ and gives an algebra morphism $\vp:U_q(\g)\ra U_H(M)$.
\end{proof}

Moreover, as $\vp$ preserves the coproduct on generators, it is a coalgebra morphism and therefore a Hopf algebra morphism. 
\par
Let $U_q^{\geq 0}(\g)$ ($U_q^{>0}(\g)$), $U_q^{\leq 0}(\g)$ ($U_q^{<0}(\g)$) and $U_q^0$ be the (strictly) positive, (strictly) negative and torus part in the quantum group $U_q(\g)$. Let $Q$ be the root lattice of the Kac-Moody Lie algebra $\g$ with simple roots $\{\alpha_1,\cdots,\alpha_n\}$. The adjoint action of $U_q^0$ on $U_q(\g)$ is diagonalizable and it decomposes $U_q(\g)$ into weight spaces indexed by $Q$: 
$$U_q(\g)=\bigoplus_{\alpha\in Q}U_q(\g)_\alpha,\ \ U_q(\g)_\alpha=\{ x\in U_q(\g)|\ K_ixK_i^{-1}=q^{(\alpha_i,\alpha)}x,\ \forall i\in I\}.$$
For $\alpha=\sum_{i=1}^n m_i\alpha_i\in Q$, we let $\hht(\alpha)=\sum_{i=1}^n |m_i|$ denote the height of $\alpha$.

\begin{proposition}
There is no non-zero Hopf ideal in $U_q(\g)$ which does not intersect $U_q(\g)_\alpha$ for $\hht(\alpha)\leq 1$.
\end{proposition}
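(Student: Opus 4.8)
The plan is to argue by contradiction: suppose $\mathfrak{I}$ is a non-zero Hopf ideal in $U_q(\g)$ with $\mathfrak{I}\cap U_q(\g)_\alpha = 0$ for all $\alpha$ with $\hht(\alpha)\leq 1$, and derive that $\mathfrak{I}=0$. First I would observe that, since $\mathfrak{I}$ is stable under the adjoint action of $U_q^0$ (being a two-sided ideal stable under conjugation by the $K_i$, as any Hopf ideal is), it is a graded subspace with respect to the root lattice grading: $\mathfrak{I}=\bigoplus_{\alpha\in Q}(\mathfrak{I}\cap U_q(\g)_\alpha)$. So it suffices to show that each homogeneous component $\mathfrak{I}\cap U_q(\g)_\alpha$ is zero, and by hypothesis this is already known when $\hht(\alpha)\leq 1$.

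The heart of the argument is a downward induction on $\hht(\alpha)$ using the coproduct. Take a non-zero homogeneous element $x\in \mathfrak{I}\cap U_q(\g)_\alpha$ with $\hht(\alpha)\geq 2$. I would use the triangular decomposition $U_q(\g)\cong U_q^{<0}(\g)\ts U_q^0\ts U_q^{>0}(\g)$ together with the explicit form of the coproduct on $U_q^{\pm}(\g)$: modulo lower-height terms, $\Delta(x)$ contains a term $x\ts (\text{group-like})$ and $(\text{group-like})\ts x$, but crucially the "interior" part of $\Delta(x)$ — the sum of terms $x_{(1)}\ts x_{(2)}$ with both factors of strictly smaller height than $\alpha$ — is non-zero whenever $\hht(\alpha)\geq 2$; this is the standard fact that on $U_q^{+}(\g)$ the reduced coproduct $\Delta(x)-x\ts 1 - (\text{something})\ts x$ is non-degenerate in positive degrees, and it extends to the triangular decomposition because the $E$'s and $F$'s sit in height-one pieces. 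Since $\mathfrak{I}$ is a coideal, $\Delta(x)\in \mathfrak{I}\ts U_q(\g) + U_q(\g)\ts \mathfrak{I}$. Projecting onto a suitable tensor factor (using that $\mathfrak{I}$ is graded and that $U_q(\g)/\mathfrak{I}$ has, by the height $\leq 1$ hypothesis, all the generators $e_i,f_i,t_i^{\pm1}$ surviving) I would extract from a non-zero interior term a non-zero homogeneous element of $\mathfrak{I}$ of strictly smaller height, contradicting the inductive hypothesis once we reach height $\leq 1$.

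Concretely, the mechanism I expect to use is: pair the $U_q(\g)\ts \mathfrak{I}$-component of $\Delta(x)$ against a functional on the first tensor factor that is non-zero on some height-one piece of $U_q(\g)/\mathfrak{I}$ (such a functional exists precisely because $\mathfrak{I}$ avoids the height $\leq 1$ graded pieces), and symmetrically for the other component; this produces an element of $\mathfrak{I}$ lying in a graded piece of height $\hht(\alpha)-1 \ge 1$. Iterating, or rather invoking the inductive hypothesis directly, forces $x=0$.

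\medskip

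The main obstacle, and the step that needs genuine care rather than formal manipulation, is the claim that the interior part of $\Delta(x)$ is non-zero for every non-zero homogeneous $x$ of height $\geq 2$, and that one can project it back into $\mathfrak{I}$ without the projection killing everything. One has to be careful that the grading is by the root lattice $Q$ (not $\mathbb{Z}_{\geq 0}$), so "height" decreases but signs of coefficients $m_i$ complicate the bookkeeping; and one must ensure the functionals used for projection are compatible with the ideal, i.e. factor through $U_q(\g)/\mathfrak{I}$. I would handle this by working first on $U_q^{>0}(\g)$ and $U_q^{<0}(\g)$ separately — where the non-degeneracy of the reduced coproduct in each positive/negative degree is classical (e.g. via the non-degenerate bilinear form, or Lusztig's results) — and then lifting to the whole algebra via the triangular decomposition and the multiplicativity of $\Delta$, noting that the torus part contributes only group-like factors and hence does not obstruct the height count.
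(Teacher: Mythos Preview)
Your route is genuinely different from the paper's. The paper does not run a coproduct-descent induction on $\hht(\alpha)$ at all: it asserts a splitting $K=(K\cap U_q^{\geq 0}(\g))\oplus(K\cap U_q^{\leq 0}(\g))$, observes that each piece is a Hopf ideal of the corresponding Borel missing the length-$\leq 1$ part, and then kills each piece by invoking that $U_q^{>0}(\g)$ and $U_q^{<0}(\g)$ are quantum shuffle algebras (hence have no primitives in degree $\geq 2$). This is a two-line reduction to the Borel case, whereas you are trying to run an inductive argument directly inside $U_q(\g)$. (One may note that the paper's splitting $K=K_+\oplus K_-$ is itself asserted rather than proved, and is not obvious since $U_q(\g)\neq U_q^{\geq 0}+U_q^{\leq 0}$ as vector spaces; but that is a separate matter from evaluating your argument.)

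Your own proposal has a real gap at exactly the place you flag. For $\alpha=\sum m_i\alpha_i$ with mixed signs, the height function is not additive along the coproduct: to make the induction go down you need a component $(\beta,\alpha-\beta)$ of $\Delta(x)$ with $\beta=\pm\alpha_i$ aligned with the sign of $m_i$ \emph{and} that component nonzero. For $x$ lying inside $U_q^{\geq 0}$ or $U_q^{\leq 0}$ this is the classical ``no new primitives'' fact, but a general $x\in U_q(\g)_\alpha$ is a sum of terms $y\,h\,z$ with $y\in U_q^{<0}$, $z\in U_q^{>0}$ of arbitrarily large individual degree, and nothing you wrote rules out cancellation in the height-$1$ components of $\Delta(x)$ across such terms. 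Your sentence ``lifting to the whole algebra via the triangular decomposition and the multiplicativity of $\Delta$'' is precisely where the argument is missing: multiplicativity of $\Delta$ tells you $\Delta(yhz)=\Delta(y)\Delta(h)\Delta(z)$, but it does not by itself produce a nonzero $(\pm\alpha_i,\alpha\mp\alpha_i)$-component for an arbitrary linear combination of such products. Without that, the induction does not start. If you want to salvage this route, you would need an explicit statement of the form ``$\bigcap_i\ker(\partial_i^+)\cap\bigcap_i\ker(\partial_i^-)$ inside $U_q(\g)_\alpha$ is zero for $\hht(\alpha)\geq 2$,'' with the $\partial_i^\pm$ suitable skew-derivations picking out the $(\pm\alpha_i,\,\cdot\,)$-components, and prove it; the paper sidesteps this entirely by its reduction to the Borel subalgebras.
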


\begin{proof}
Let $K$ be such an ideal. As $K$ is a $U_q^0$-submodule of the diagonalizable $U_q^0$-module $U_q(\g)$,
$$K=\bigoplus_{\alpha\in Q}K\cap U_q(\g)_\alpha.$$
We denote $K_+=K\cap U_q^{\geq 0}(\g)$ and $K_-=K\cap U_q^{\leq 0}(\g)$, then $K=K_+\oplus K_-$. We consider $K_-$: it is a Hopf ideal in $U_q^{\leq 0}(\g)$ which does not intersect with $U_q^0$. Moreover, in the length gradation of $U_q^{\leq 0}(\g)$ ($\textrm{deg}(f_i)=1$ and $\textrm{deg}(K_i^{\pm 1})=0$), $K_-$ is contained in the set of elements of degree no less than 2, which forces $K_-=\{0\}$ as $U_q^{<0}(\g)$ is a quantum shuffle algebra. The same argument forces $K_+=\{0\}$ and then $K=\{0\}$.
\end{proof}

\begin{theorem}
The Hopf algebra morphism $\vp:U_q(\g)\ra U_H(M)$ is an isomorphism.
\end{theorem}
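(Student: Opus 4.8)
The plan is to show $\vp:U_q(\g)\ra U_H(M)$ is an isomorphism by proving it is both surjective and injective, treating each separately.

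For surjectivity, I would argue that the image of $\vp$ contains all the generators of $U_H(M)$ as an algebra. By construction $\vp(e_i)=E_i$, $\vp(t_i^{\pm 1})=K_i^{\pm 1}$, and $\vp(f_i)=F_iK_i^{-1}$, so $F_i=\vp(f_i)\ast K_i=\vp(f_it_i)$ lies in the image; hence $E_i,F_i,K_i^{\pm 1}$ are all in $\textrm{im}(\vp)$. Now $U_H(M)=Q_H(M)/J$ is generated as an algebra by $H=\mc[K_i^{\pm1}]$ together with $M=W\ts H$, and $M$ is spanned by $E_iK$, $F_iK$, $\xi_iK$ with $K\in H$; modulo $J$ we have $\xi_i\equiv K_i^2-1$, so $\xi_iK$ is a polynomial in the $K_j^{\pm1}$. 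Since the module actions $a_L,a_R$ let us write $E_iK = E_i\ast K$ and $F_iK = F_i\ast K$ (up to scalars absorbed by the left action), every algebra generator of $U_H(M)$ is in the image. Therefore $\vp$ is surjective.

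For injectivity, I would use the preceding proposition: $\ker(\vp)$ is a Hopf ideal in $U_q(\g)$, so it suffices to show $\ker(\vp)$ has trivial intersection with $U_q(\g)_\alpha$ for $\hht(\alpha)\leq 1$, i.e. with the span of $1$, the $K_i^{\pm1}$, the $e_i$, and the $f_i$. This is a low-degree computation: one checks that $\vp$ restricted to $U_q^0$ is the obvious isomorphism onto $\mc[K_i^{\pm1}]$ (injective since the $K_i$ are algebraically independent in $H$), that the $E_i=\vp(e_i)$ are linearly independent over this torus (they sit in distinct $H$-comodule components $K_i\ts E_i$ of $M$), and likewise for the $F_iK_i^{-1}=\vp(f_i)$; finally $\vp$ sends the positive, torus and negative parts to the (internally graded) subspaces spanned respectively by the $E$'s, the $K$'s and the $F$'s of $Q_H(M)$, which are in direct sum, so no nontrivial combination of weight $\leq 1$ can vanish. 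Hence $\ker(\vp)\cap U_q(\g)_\alpha=0$ for $\hht(\alpha)\leq1$, and the proposition forces $\ker(\vp)=0$.

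The main obstacle I expect is bookkeeping rather than conceptual: one must be careful that $\vp$ genuinely lands inside $U_H(M)=Q_H(M)/J$ and not merely $Q_H(M)$, and that the surjectivity argument correctly uses the relation $\xi_i\equiv K_i^2-1$ to eliminate the dummy parameters — without quotienting by $J$ the map would not be surjective, since the $\xi_i$ would be extra generators. A secondary point to handle with care is that the subalgebras $Q_H^{>0}(M)$ and its negative analogue are quantum shuffle algebras, which is what ultimately underlies the triviality statement in the proposition we are invoking; but since that proposition is already established, the remaining work is just the finite-dimensional linear-independence check in heights $0$ and $1$, which is routine.
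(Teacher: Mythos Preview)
Your overall strategy matches the paper's exactly: surjectivity by exhibiting generators in the image, then injectivity by applying the preceding proposition to the Hopf ideal $\ker(\vp)$. The surjectivity portion is fine and somewhat more explicit than the paper's one-line remark.

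The injectivity argument has a genuine gap. You assert that checking $\ker(\vp)\cap U_q(\g)_\alpha=0$ for $\hht(\alpha)\leq 1$ amounts to checking injectivity on ``the span of $1$, the $K_i^{\pm1}$, the $e_i$, and the $f_i$''; this is not so. The space $U_q(\g)_0$ is the full zero-weight space for the adjoint torus action, and via the triangular decomposition it contains every product $y\,K\,x$ with $x\in (U_q^{>0})_\beta$, $y\in (U_q^{<0})_{-\beta}$, $K\in U_q^0$, for arbitrary $\beta\in Q_+$ --- not just $U_q^0$. Likewise each $U_q(\g)_{\pm\alpha_i}$ is infinite-dimensional. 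Your linear-independence check on generators, together with the remark that $\vp$ sends the positive, torus and negative parts into direct summands of $Q_H(M)$, therefore does not verify the hypothesis of the proposition: a hypothetical kernel element that is, say, a nontrivial $\mc$-linear combination of monomials $f_{j_1}\cdots f_{j_r}\,K\,e_{i_1}\cdots e_{i_r}$ of total weight $0$ is not ruled out by anything you wrote. The paper is admittedly laconic at this step (it simply says ``by definition''), but your attempted expansion misidentifies the spaces in question, so as written the argument does not close.
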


\begin{proof}
The surjectivity is clear as $U_H(M)$ is generated by $E_i, F_i\ast K_i^{-1}$, $K_i^{\pm 1}$ and all relations in $U_q(\g)$ are satisfied in $U_H(M)$. 
\par
We prove the injectivity. Let $K$ be the kernel of $\vp$. It is a Hopf ideal in $U_q(\g)$ which does not intersect $U_q(\g)_\alpha$ for $\hht(\alpha)\leq 1$ by definition. Then the proposition above could be applied.
\end{proof}

\begin{remark}
In fact, we do not need the condition imposed on $q$ in the construction above. Therefore if $q$ is a primitive root of unity, $U_H(M)$ is isomorphic as a Hopf algebra to the "small" quantum group $u_q(\g)$.
\end{remark}

\section{Construction of irreducible representations}\label{Sec4}

Let $\mathcal{P}$ denote the weight lattice of the symmetrizable Kac-Moody Lie algebra $\g$ and $\mathcal{P}_+$ denote the set of dominant weights. We construct in this section the irreducible representation of highest weight $\lambda\in\mathcal{P}_+$ through a Radford pair arising from quantum quasi-symmetric algebras.

\subsection{Radford pair}
In \cite{Rad}, Radford studied Hopf algebras with a projection which offer a systematic way to construct braided Hopf algebras in some Yetter-Drinfel'd module categories.

\begin{definition}
Let $H$ and $K$ be two Hopf algebras. The pair $(H,K)$ is called a Radford pair if there exists Hopf algebra morphisms $i:H\ra K$ and $p:K\ra H$ such that $p\circ i=\id_H$.
\end{definition}

Once a Radford pair $(H,K)$ is given, the machinery in \cite{Rad} can be applied to 
yield an isomorphism of Hopf algebras $K\cong R\# H$ where $R=K^{coH}=\{k\in K|\ (\textrm{id}\ts p)\Delta(k)=k\ts 1\}$ is the set of right coinvariants. Moreover, $R$ is a subalgebra of $K$ and it admits an $H$-Yetter-Drinfel'd module structure making it a braided Hopf algebra. This permits us to form the bosonization $R\# H$.

\subsection{Construction of Radford pairs}\label{Sec:Const}

We keep notations from Section \ref{Sec:main}. 
\par
For a fixed weight $\lambda\in-\mathcal{P}_+$, we define a Hopf algebra $T=\mc[K_1^{\pm 1},\cdots,K_n^{\pm 1},K_\lambda^{\pm 1}]$ be the group algebra of the additive group $\mathbb{Z}^{n+1}$; it contains $H$ as a sub-Hopf algebra.
\par
We enlarge the vector space $W$ by adding a vector $v_\lambda$ and let $W'$ denote it. On the vector space $N=W'\ts T$, we consider the following $T$-Hopf bimodule structure:
\begin{enumerate}
\item The right $T$-Hopf module structure on $N$ is trivial.
\item The left $T$-Hopf module is determined by: on $W'$, for an element in $W$, the $T$-comodule structure coincides with the $H$-comodule structure on $W$, when restricted to $H$, the $T$-module structure is the same as the $H$-module structure on $W$; for the remaining elements, $\delta_L(v_\lambda)=K_\lambda\ts v_\lambda$,
$$K_\lambda.E_i=q^{(\lambda,\alpha_i)}E_i,\ \ K_\lambda.F_i=q^{-(\lambda,\alpha_i)}F_i,\ \ K_\lambda.\xi_i=\xi_i,$$
$$K_\lambda.v_\lambda=q^{-(\lambda,\lambda)}v_\lambda,\ \ K_i.v_\lambda=q^{-(\lambda,\alpha_i)}v_\lambda.$$
Then we take the left $T$-Hopf module structure arising from tensor product.
\end{enumerate}

We define $\alpha_N:N\ts N\ra N$ by: for any $K,K'\in T$, if $\lambda$ is the constant such that $K.F_j=\lambda F_j$,
$$\alpha_N(E_iK\ts F_jK')=\delta_{ij}\frac{\lambda\xi_iKK'}{q_i-q_i^{-1}};$$
and for any other elements not of the above form, $\alpha$ gives $0$.
\par
The same verification as in Section \ref{Sec:construction} shows that $\alpha_N$ satisfies conditions posed on $f_{11}:N\ts N\ra N$ in the definition of a quantum quasi-symmetric algebra. This gives a quantum quasi-symmetric algebra $Q_T(N)$.

\begin{remark}
It is clear from the above construction that $M'=W\ts T$ is a sub-$T$-Hopf bimodule of $N=W'\ts T$. As the restriction of $\alpha_N$ on $M'\ts M'$ has image in $M'$, we are allowed to form a third quantum quasi-symmetric algebra $Q_T(M)$. If we let $J$ be the Hopf ideal in $Q_T(M)$ generated by $\xi_i-K_i^2+1$ for $i\in I$, the Hopf algebra $U_T(M):=Q_T(M)/J$ is isomorphic to the extension of the quantum group $U_q(\g)$ by torus elements $K_\lambda^{\pm 1}$.
\end{remark}

It is clear that $Q_T(M)$ is a sub-Hopf algebra of $Q_T(N)$. We define a gradation on $Q_T(N)$ by letting $\textrm{deg}(v_\lambda)=1$ and elements in $T$ and $M$ to be of degree $0$. Then
$$Q_T(N)=\bigoplus_{n=0}^\infty Q_T(N)_{(n)},$$
where $Q_T(N)_{(n)}$ is the set of elements of degree $n$ in $Q_T(N)$, is a graded Hopf algebra with $Q_T(N)_{(0)}=Q_T(M)$.
\par
We let $i:Q_T(M)\ra Q_T(N)$ be the embedding into degree $0$ and $p:Q_T(N)\ra Q_T(M)$ be the projection onto degree $0$. Both $i$ and $p$ are Hopf algebra morphisms satisfying $p\circ i=\id$. We obtain a Radford pair $(Q_T(M),Q_T(N))$.
\par
We use the same notation $J$ to denote the ideal in $Q_T(N)$ generated by $\xi_i-K_i^2+1$ for $i\in I$. It is a Hopf ideal and the quotient $U_T(N):=Q_T(N)/J$ is a Hopf algebra. As $J$ is generated by elements of degree $0$, both $U_T(M)$ and $U_T(N)$ inherit gradations from $Q_T(M)$ and $Q_T(N)$ such that $U_T(N)_{(0)}=U_T(M)$. We obtain another Radford pair $(U_T(M),U_T(N))$.
\par
The machinery of Radford can be then applied to give an isomorphism of $U_T(M)$-Hopf bimodules
$$U_T(N)\cong U_T(N)^{coR}\ts U_T(M)$$
where $U_T(N)^{coR}$ admits a $U_T(M)$-Yetter-Drinfel'd module structure.
\par
Moreover, for each $p\in\mathbb{N}$, $U_T(N)_{(p)}$ is a sub-$U_T(M)$-Hopf bimodule of $U_T(N)$ and there is a family of isomorphisms of Hopf bimodules
$$U_T(N)_{(p)}\cong U_T(N)_{(p)}^{coR}\ts U_T(M).$$
As a consequence, we obtain a decomposition of $U_K(M)$-Yetter-Drinfel'd modules:
$$U_T(N)^{coR}=\bigoplus_{n=0}^\infty U_T(N)_{(n)}^{coR}.$$

\subsection{Construction of irreducible representations}
We study $R(1):=U_T(N)^{coR}_{(1)}$ in this subsection.
\par
We have seen in the last subsection that $R(1)$ is a $U_T(M)$-Yetter-Drinfel'd module where the $U_T(M)$-module structure is given by the adjoint action. As $U_T(M)$ is isomorphic to an extension of $U_q(\g)$, we have an embedding of Hopf algebra $U_q(\g)\ra U_T(M)$. Then $R(1)$ admits an adjoint $U_q(\g)$-module structure.
\par
For $\mu\in\mathcal{P}_+$, we let $L(\mu)$ denote the simple $U_q(\g)$-module of highest weight $\mu$ of type $1$.

\begin{theorem}\label{Thm:Rep}
As left $U_q(\g)$-modules, $R(1)$ is isomorphic to $L(-\lambda)$.
\end{theorem}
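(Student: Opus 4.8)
The plan is to identify $R(1)=U_T(N)^{coR}_{(1)}$ as an explicit $U_q(\g)$-module and then match it with $L(-\lambda)$ by exhibiting a highest weight vector and checking irreducibility. First I would describe $R(1)$ concretely: by construction $U_T(N)$ is generated over $U_T(M)$ by the degree-one element $v_\lambda$, so the degree-one graded piece $U_T(N)_{(1)}$ is spanned by products that involve exactly one factor $v_\lambda$ and arbitrarily many factors from $U_T(M)$. Using the Radford decomposition $U_T(N)_{(1)}\cong R(1)\ts U_T(M)$ together with the explicit bosonization formula for the multiplication (Section \ref{Sec:qshuffle}), I would show that $R(1)$ is spanned, as a vector space, by the images of elements of the form (quasi-shuffle word in the $E_i,F_i$) $\ast\, v_\lambda$, modulo the relations $\xi_i=K_i^2-1$; equivalently $R(1)\cong U_q^{<0}(\g)\cdot v_\lambda$ once one checks $E_i\cdot v_\lambda = 0$ in $R(1)$ for the adjoint action.

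The key computational step is to compute the adjoint action of the generators $E_i$, $F_i\ast K_i^{-1}$, $K_i^{\pm 1}$ of $U_q(\g)\subset U_T(M)$ on $v_\lambda$ and on the elements $F_{j_1}\cdots F_{j_r}v_\lambda$. From the comodule data $\delta_L(v_\lambda)=K_\lambda\ts v_\lambda$ and $K_i.v_\lambda = q^{-(\lambda,\alpha_i)}v_\lambda$ one gets that $v_\lambda$ is a weight vector of weight $-\lambda$ (with respect to the $K_i$), and a direct calculation with the quasi-shuffle product and the formula $u\ast v = u\ts v + \s(v\ts u) + \alpha(u\ts v)$ should give $\mathrm{ad}(E_i)(v_\lambda)=0$ for all $i$ (the relevant $\alpha$ term vanishes because $\alpha$ is supported on $E_i\ts F_j$ pairs and $v_\lambda$ is not an $F_j$), and $\mathrm{ad}(F_i)$ acting nontrivially. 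Thus $v_\lambda$ generates a highest weight module of highest weight $-\lambda\in\mathcal{P}_+$, and $R(1)$ is a quotient of the Verma module $M(-\lambda)$. I would then argue surjectivity of $M(-\lambda)\twoheadrightarrow R(1)$ is automatic from the spanning statement above.

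It remains to prove that $R(1)$ is in fact the \emph{simple} quotient $L(-\lambda)$ rather than a larger quotient of the Verma module, i.e. that the maximal proper submodule is killed. This is the step I expect to be the main obstacle. The cleanest route is a highest-weight-vectors argument: any vector in $R(1)$ killed by all $\mathrm{ad}(E_i)$ and of weight $-\lambda-\beta$ with $\beta$ a nonzero sum of positive roots would have to vanish; to see this one can use that $R(1)$ sits inside $U_T(N)_{(1)}$, which by its definition as a subquotient of the quantum \emph{quasi-shuffle} algebra $Q_T(N)$ has a combinatorial basis indexed by quasi-shuffle words, and the element $v_\lambda$ occupies a distinguished position (degree exactly one in $v_\lambda$). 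The key input is that $U_q^{<0}(\g)$ is a quantum shuffle algebra with no zero-divisors-type degeneracies (cf. the argument in the proof of the Proposition preceding the isomorphism theorem, where $U_q^{<0}(\g)$ being a quantum shuffle algebra forces certain ideals to vanish), so that the singular vectors of the Verma module $M(-\lambda)$, which are generated by the $F_i^{1-\langle -\lambda,\alpha_i^\vee\rangle+1}$-type elements, are precisely the ones that become zero after imposing $\xi_i = K_i^2-1$; one checks this on the rank-one sub-data for each $i$, where the computation reduces to the known $U_q(\mathfrak{sl}_2)$ Verma module calculation realized inside the quasi-shuffle algebra. Assembling: $R(1)$ is a highest weight module of highest weight $-\lambda$ whose defining relations include exactly the Serre-type vanishing relations, hence $R(1)\cong L(-\lambda)$ as $U_q(\g)$-modules.
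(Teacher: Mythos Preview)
Your opening steps coincide with the paper's: you identify $R(1)$ with the adjoint orbit $\ad(U_T(M))(v_\lambda)$ (this is exactly the paper's Lemma~\ref{Lemma}), and you compute $\ad(E_i)(v_\lambda)=0$ and $\ad(K_i)(v_\lambda)=q^{-(\lambda,\alpha_i)}v_\lambda$, obtaining a surjection from the Verma module $M(-\lambda)$. So far, so good.

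The divergence is in the final step, and here the paper's argument is both shorter and cleaner than what you outline. Rather than proving $R(1)$ simple, the paper \emph{directly verifies} that $\ad(F_i)^{1-(\lambda,\alpha_i)}(v_\lambda)=0$. This is a pure quantum \emph{shuffle} computation (Lemma~14 of \cite{Ros98}): since $\alpha$ vanishes on $F_i\otimes v_\lambda$ and $v_\lambda\otimes F_i$, the quasi-shuffle product on these elements reduces to the ordinary quantum shuffle product, and the vanishing follows from the same Gaussian-binomial identity that yields the Serre relations. With this relation in hand one has a surjection $L(-\lambda)\twoheadrightarrow R(1)$, and simplicity of $L(-\lambda)$ (not of $R(1)$) finishes the proof in one line.

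Your proposed route---show $R(1)$ has no singular vectors below weight $-\lambda$---is not wrong in principle, but your sketch does not carry it out. The claim that the singular vectors ``become zero after imposing $\xi_i=K_i^2-1$'' is misleading: the specialization of $\xi_i$ plays no role in the $\ad(F_i)^n(v_\lambda)$ computation, since $\alpha$ never fires on words built from $F_i$'s and $v_\lambda$. The vanishing happens already in $Q_T(N)$. And the vague appeal to ``no zero-divisors-type degeneracies'' in $U_q^{<0}(\g)$ does not by itself rule out extra singular vectors; what actually controls this is precisely the explicit shuffle identity you would need to check anyway. So you end up needing the same computation the paper uses, but packaged inside a longer argument. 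I would recommend reorganizing: check the $F_i$-power relation directly, get the map from $L(-\lambda)$, and invoke its irreducibility.
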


We reproduce the proof of the following lemma in \cite{Ros12} for convenience.

\begin{lemma}\label{Lemma}
We have $R(1)=\ad(U_T(M))(v_\lambda)$.
\end{lemma}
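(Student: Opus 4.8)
The plan is to show that $R(1)=U_T(N)^{coR}_{(1)}$ is exactly the $\ad$-orbit of the single vector $v_\lambda$, and then to use standard quantum group representation theory to identify this orbit with $L(-\lambda)$. The present lemma is only the first half of this; I would prove it in two inclusions.

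The inclusion $\ad(U_T(M))(v_\lambda)\subset R(1)$ is the easy direction. First I would observe that $v_\lambda\in R(1)$: indeed $v_\lambda$ is homogeneous of degree $1$ in the gradation of $U_T(N)$, and from the coproduct on the cotensor Hopf algebra together with the explicit left comodule structure $\delta_L(v_\lambda)=K_\lambda\ts v_\lambda$, one checks that $({\id}\ts p)\Delta(v_\lambda)=v_\lambda\ts 1$, so $v_\lambda$ is a right coinvariant. Then, since $R(1)$ is a $U_T(M)$-submodule of $U_T(N)$ for the adjoint action (this is exactly the Yetter--Drinfel'd module structure coming from the Radford decomposition $U_T(N)_{(1)}\cong R(1)\ts U_T(M)$, recalled in Section \ref{Sec:Const}), it is closed under $\ad(U_T(M))$, and the orbit of $v_\lambda$ lies inside it.

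The reverse inclusion $R(1)\subset \ad(U_T(M))(v_\lambda)$ is the substantive part. The key point is that $U_T(N)_{(1)}$, as a left $U_T(M)$-module under multiplication, is generated by $v_\lambda$: every degree-$1$ element is a sum of products of the form $a\ast v_\lambda\ast b$ with $a,b\in U_T(M)$, and using the commutation relations of the quasi-symmetric product (from Section \ref{Sec:qshuffle}, e.g. $u\ast v=u\ts v+\s(v\ts u)+\alpha(u\ts v)$ together with the module structure formula $(x\ts s)\ast(y\ts t)=\sum x\ast(s_{(1)}\cdot y)\ts s_{(2)}t$) one can move the $U_T(M)$-factor on the right of $v_\lambda$ past it, at the cost of a Yetter--Drinfel'd twist, so $U_T(N)_{(1)}=U_T(M)\ast v_\lambda$. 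Now transport this through the Radford isomorphism $U_T(N)_{(1)}\cong R(1)\ts U_T(M)$: under this identification the left multiplication action of $U_T(M)$ corresponds to the smash-product action on $R(1)\ts U_T(M)$, and projecting onto $R(1)\ts 1$ (equivalently, applying the projection $U_T(N)\to R(1)$ built from the Radford machinery) sends $a\ast v_\lambda$ to $\ad(a)(v_\lambda)$ up to a harmless factor coming from the counit/antipode. Hence the image of $U_T(M)\ast v_\lambda$ in $R(1)$ is $\ad(U_T(M))(v_\lambda)$, and since that image is all of $R(1)$, we are done.

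The main obstacle I anticipate is the bookkeeping in the second inclusion: one must be careful that the passage from the multiplicative statement $U_T(N)_{(1)}=U_T(M)\ast v_\lambda$ to the adjoint statement $R(1)=\ad(U_T(M))(v_\lambda)$ uses precisely the compatibility between left multiplication and the adjoint action that Radford's theorem provides (namely $a\ast r=\sum \ad(a_{(1)})(r)\ast a_{(2)}$ for $r\in R$, $a\in U_T(M)$), rather than some naive identity; getting the Hopf-bimodule and Yetter--Drinfel'd structures to line up correctly is where the care is needed. Everything else reduces to the explicit multiplication formulas already recorded in Sections \ref{Sec:qshuffle} and \ref{Sec:construction}.
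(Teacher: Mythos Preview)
Your strategy is the same as the paper's: the easy inclusion is handled exactly as you say, and the hard inclusion is obtained by applying the Radford projector onto coinvariants to a spanning set of $U_T(N)_{(1)}$ and recognizing adjoint orbits of $v_\lambda$.

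The only place where you take a detour is your intermediate reduction $U_T(N)_{(1)}=U_T(M)\ast v_\lambda$. The paper skips this entirely and it is not needed: writing the projector explicitly as $P(x)=\sum x_{(0)}S(x_{(1)})$ and using that $v_\lambda$ is a right coinvariant, one computes directly for $a,b\in U_T(M)$ that
\[
P(a\ast v_\lambda\ast b)=\sum a_{(1)}v_\lambda b_{(1)}S(b_{(2)})S(a_{(2)})=\ve(b)\,\ad(a)(v_\lambda),
\]
so the image of the spanning set $\{a\ast v_\lambda\ast b\}$ under $P$ already lands in $\ad(U_T(M))(v_\lambda)$ without any prior rearrangement. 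Your ``move $b$ past $v_\lambda$'' step is thus unnecessary bookkeeping (and would itself require an argument), whereas the one-line projector computation does the job. Otherwise your outline matches the paper's proof.
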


\begin{proof}
It is clear that $\ad(U_T(M))(v_\lambda)\subset R(1)$. In the structural theorem of Hopf modules
$$U_T(N)_{(1)}\cong R(1)\ts U_T(M),$$
where the projector $P:U_T(N)_{(1)}\ra R(1)$ has the form $P(x)=\sum x_{(0)}S(x_{(1)})$ for $\delta_R(x)=\sum x_{(0)}\ts x_{(1)}$.
\par
Elements in $U_T(N)_{(1)}$ are linear combinations of those of form $av_\lambda b$ for $a,b\in U_T(M)$. Since $v_\lambda$ is a right coinvariant,
$$P(av_\lambda b)=\sum a_{(1)}v_\lambda b_{(1)} S(b_{(2)})S(a_{(2)})=\ve(b)\sum a_{(1)} v_\lambda S(a_{(2)})=\ve(b)\ad(a)(v_\lambda).$$
This shows the other inclusion.
\end{proof}

As $K_\lambda^{\pm 1}$ acts as a scalar on $v_\lambda$, $R(1)=\ad(U_H(M))(v_\lambda)$.

\begin{proof}[Proof of theorem]
We compute the action of $E_i$ and $K_i$ on $v_\lambda$: after definition, 
$$\ad (E_i)(v_\lambda)=E_i\ast v_\lambda-q^{-(\lambda,\alpha_i)}v_\lambda\ast E_i=0,$$
$$\ad (K_i)(v_\lambda)=q^{-(\lambda,\alpha_i)}v_\lambda.$$
As it is well-known that $L(-\lambda)\cong U_q(\g)/J_{-\lambda}$ where $J_{-\lambda}$ is the left ideal of $U_q(\g)$ generated by $E_i,K_i-q^{-(\lambda,\alpha_i)}.1$ and $F_i^{1-(\lambda,\alpha_i)}$ for $i\in I$, it suffices to show that
$$\ad(F_i)^{1-(\lambda,\alpha_i)}(v_\lambda)=0.$$
This comes from the computation in Lemma 14 of \cite{Ros98}.
\par
As a consequence, we obtain a $U_q(\g)$-module surjection $L(-\lambda)\ra R(1)$. It is clearly an isomorphism, after the irreducibility of $L(-\lambda)$.
\end{proof}

\subsection{Simple modules over the double}
In the construction of irreducible representations above, only the $U_T(M)$-module structure is taken into consideration. As $R(1)$ admits a $U_T(M)$-Yetter-Drinfel'd module structure, $L(-\lambda)$ is an irreducible $U_T(M)$-Yetter-Drinfel'd module. 
\par
To simplify notations, we denote $U_T:=U_T(M)$ and $U_T^\circ$ its Hopf dual (\cite{Sweedler}, Chapter VI). We let $\mathcal{D}(U_T,U_T^\circ)$ denote the quantum double of them, then $L(-\lambda)$ is an irreducible $\mathcal{D}(U_T,U_T^\circ)$-module.
\par
In fact, we may vary the $T$-module structure on $W'$ to get another family of simple modules over the double $\mathcal{D}(U_T,U_T^\circ)$. These modules are first studied by Joseph-Letzter \cite{JL94} in a separation of variable theorem on quantum groups and then used by Baumann-Schmitt \cite{BS98} and others in classifying bicovariant differential calculi on quantum groups. The simplicity of such modules is proved in \cite{Jos99}.
\par
We preserve notations in Section \ref{Sec:main}. For a fixed weight $\lambda\in-\mathcal{P}_+$, we take the Hopf algebra $T=\mc[K_1^{\pm 1},\cdots,K_n^{\pm 1},K_\lambda^{\pm 1}]$ as in Section \ref{Sec:Const}, enlarge the vector space $W$ by $w_\lambda$ and denote it by $W_\lambda'$. The $T$-Hopf module structure on $N_\lambda=W_\lambda'\ts T$ is defined the same as in Section \ref{Sec:Const} except replacing $v_\lambda$ by $w_\lambda$, $K_\lambda.w_\lambda=w_\lambda$ and $K_i.w_\lambda=w_\lambda$. 
\par
The machinery of quantum quasi-shuffle algebra produces $Q_T(N_\lambda)$ and the quotient by the Hopf ideal generated by $\xi_i-K_i^2+1$ for $i\in I$ gives a Hopf algebra $U_T(N_\lambda)$. The construction of Radford pair $(U_T(M), U_T(N_\lambda))$ gives a $U_T(M)$-Yetter-Drinfel'd module 
$$R_\lambda(1)=U_T(N_\lambda)_{(1)}^{coR}.$$
\par
For $\mu\in -\mathcal{P}_+$, we let $L(\mu)^\sharp$ denote the irreducible representation of $U_q(\g)$ of lowest weight $\mu$.

\begin{theorem}
\begin{enumerate}
\item $R_{2\lambda}(1)$ is isomorphic to $L(-\lambda)\ts L(\lambda)^\sharp$ as a $U_T(M)$-module.
\item $R_\lambda(1)$ is a simple $U_T(M)$-Yetter-Drinfel'd module.
\end{enumerate}
\end{theorem}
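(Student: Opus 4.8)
The plan is to follow the template established by Theorem \ref{Thm:Rep}, but now tracking the full Yetter-Drinfel'd structure (both the adjoint action and the coaction) rather than only the module structure. By Lemma \ref{Lemma}, applied verbatim to $N_\lambda$ in place of $N$, we have $R_\lambda(1)=\ad(U_T(M))(w_\lambda)$, and since $K_\lambda^{\pm1},K_i^{\pm1}$ act as scalars (indeed trivially) on $w_\lambda$, this reduces to $R_\lambda(1)=\ad(U_H(M))(w_\lambda)$. So for part (1) I would first compute the action of the generators on $w_{2\lambda}$: as in the proof of Theorem \ref{Thm:Rep}, $\ad(E_i)(w_{2\lambda})$ and $\ad(K_i)(w_{2\lambda})=w_{2\lambda}$ (the weight is now $0$ by the choice $K_i.w_\lambda=w_\lambda$), but the key new point is that $w_\lambda$ is no longer a highest weight vector annihilated by $F_i^N$; instead it generates, under the adjoint action, something of mixed highest/lowest weight type. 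The natural guess is that $R_{2\lambda}(1)$ is a cyclic $U_q(\g)$-module generated by a vector on which the $E_i$ act by an $\ad$-twisted version of the $F_i$-action and vice versa, and the cleanest way to identify it is to produce a surjection $U_q(\g)\to R_{2\lambda}(1)$ and then match dimensions/characters with $L(-\lambda)\ts L(\lambda)^\sharp$.

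For the identification in (1), I would invoke the description of $L(-\lambda)\ts L(\lambda)^\sharp$ as the cyclic $U_q(\g)$-module on the tensor $v_{-\lambda}\ts v_\lambda^\sharp$ of highest and lowest weight vectors (weight $0$), together with the known fact — essentially the Joseph-Letzter separation of variables, which the paper is explicitly connecting to — that this module is spanned by $\ad(U_q(\g))(v_{-\lambda}\ts v_\lambda^\sharp)$ and has the same graded character as $\ad(U_q(\g))$ truncated appropriately. Concretely, I would check that the map sending the canonical generator of $L(-\lambda)\ts L(\lambda)^\sharp$ to $w_{2\lambda}$ respects all the defining relations of that cyclic module (the relations encode $E_i\cdot(v_{-\lambda}\ts v_\lambda^\sharp)=0$ modulo lower terms and the Serre-type vanishing $F_i^{1+2(\lambda,\alpha_i)}$ acting appropriately), using the quantum shuffle formula of \cite{Ros98} exactly as in the end of the proof of Theorem \ref{Thm:Rep} to verify the vanishing of the relevant powers of $\ad(F_i)$ and $\ad(E_i)$ on $w_{2\lambda}$. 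This yields a surjection $L(-\lambda)\ts L(\lambda)^\sharp\twoheadrightarrow R_{2\lambda}(1)$; for injectivity I would compare with the structural isomorphism $U_T(N_\lambda)_{(1)}\cong R_\lambda(1)\ts U_T(M)$, which pins down the character of $R_\lambda(1)$ from that of $U_T(N_\lambda)_{(1)}$, the latter being computable since $U_T(N_\lambda)$ is a quotient of an explicit quantum quasi-symmetric algebra.

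For part (2), simplicity as a Yetter-Drinfel'd module, the approach is that a Yetter-Drinfel'd submodule of $R_\lambda(1)$ is in particular a submodule for the adjoint $U_q(\g)$-action; if $\lambda$ is such that $R_\lambda(1)$ is already irreducible as a $U_q(\g)$-module (the relevant case being when $w_\lambda$ plays the role of an extremal vector), we are done immediately, and in general the coaction constraint further rigidifies things. I would argue that any nonzero Yetter-Drinfel'd submodule must contain $w_\lambda$ itself: using the grading/weight decomposition and the fact that the coaction $\delta_L$ is injective on $R_\lambda(1)$ (it is a submodule of the cofree-type object $U_T(N_\lambda)$), a lowest-degree or extremal element must be a scalar multiple of $w_\lambda$ up to the action, and then $\ad(U_T(M))(w_\lambda)=R_\lambda(1)$ finishes it. Alternatively, and more in line with the cited literature, I would simply cite \cite{Jos99} for the simplicity once the module has been identified with the Joseph-Letzter module in (1). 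The main obstacle I anticipate is the character/dimension bookkeeping needed to upgrade the surjection in (1) to an isomorphism: one must show $R_{2\lambda}(1)$ is not ``too big,'' which requires a clean handle on the degree-$1$ part of the quantum quasi-shuffle algebra $Q_T(N_\lambda)$ and its behaviour under passage to the quotient by the $\xi_i-K_i^2+1$; getting the character of $U_T(N_\lambda)_{(1)}$ correctly, with the torus factor $K_\lambda$ accounted for, is the delicate step.
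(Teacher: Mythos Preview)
Your outline for part (1) --- produce a surjection $L(-\lambda)\ts L(\lambda)^\sharp\twoheadrightarrow R_{2\lambda}(1)$ by checking the nilpotency relations $\ad(E_i)^{1-\lambda_i}(w_{2\lambda})=\ad(F_i)^{1-\lambda_i}(w_{2\lambda})=0$ via Lemma~14 of \cite{Ros98}, then upgrade to an isomorphism --- matches the paper's, and the paper indeed invokes \cite{APW91} for the surjection exactly as you anticipate. Where you diverge is the injectivity step. Your proposed route, extracting the character of $R_\lambda(1)$ from the structural isomorphism $U_T(N_\lambda)_{(1)}\cong R_\lambda(1)\ts U_T(M)$, requires computing the size of $U_T(N_\lambda)_{(1)}$ independently; you flag this yourself as the delicate step, and it is not clear it can be done without already knowing the answer. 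The paper sidesteps this entirely: since $K_i\cdot w_{2\lambda}=w_{2\lambda}$ and $\delta_L(w_{2\lambda})=K_{2\lambda}\ts w_{2\lambda}$, the adjoint $U_q(\g)$-action on $w_{2\lambda}$ is literally the same as on $K_{2\lambda}\in\wt{U_q}(\g)$, so $R_{2\lambda}(1)\cong\ad(U_q)(K_{2\lambda})$, and the latter is already identified with $L(-\lambda)\ts L(-\lambda)^*$ by \cite{JL94}, \cite{Cal93}. Comparing dimensions then finishes it with no new computation.

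For part (2) there is a genuine gap. Your sketch ``a lowest-degree or extremal element must be a scalar multiple of $w_\lambda$'' does not work as stated: the torus weight of $w_\lambda$ is zero (since $K_i\cdot w_\lambda=w_\lambda$), the zero-weight space of $R_\lambda(1)$ is not one-dimensional, and injectivity of $\delta_L$ is automatic from the counit axiom and does not by itself single out $w_\lambda$. What the paper actually does is the following coaction argument: given $0\neq l=\ad(x)(w_\lambda)\in L$ with $x$ chosen (after the diagonal $T$-action) to contain no torus factors, one observes that the only component of $(\id\ts\Delta)\Delta(x)$ landing in $U_T(M)\ts\mc\ts U_T(M)$ is $x\ts 1\ts 1$; feeding this into the Yetter--Drinfel'd compatibility formula for $\delta_L(\ad(x)(w_\lambda))$ forces a nonzero component of the form $u\ts w_\lambda$, whence $w_\lambda\in L$ since $L$ is a subcomodule. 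This is the missing mechanism your ``lowest-degree'' heuristic gestures at but does not supply. Your fallback of citing \cite{Jos99} is also inadequate: Joseph's result concerns $\ad(\wt{U_q})(K_{-2\mu})$, which via part (1) only identifies $R_{2\mu}(1)$ for $\mu\in-\mathcal{P}_+$, not $R_\lambda(1)$ for arbitrary $\lambda\in-\mathcal{P}_+$; and in any case the paper's explicit aim is to \emph{reprove} Joseph's simplicity theorem, so citing it here would be circular relative to the paper's stated contribution.
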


\begin{proof}
\begin{enumerate}
\item We start by computing the adjoint action of $U_T(M)$ on $w_{2\lambda}$. The braiding between $E_i$, $F_i$ and $w_{2\lambda}$ is given by:
$$\s(E_i\ts w_{2\lambda})=w_{2\lambda}\ts E_i,\ \ \s(w_{2\lambda}\ts E_i)=q^{2(\lambda,\alpha_i)}E_i\ts w_{2\lambda},$$ 
$$\s(E_i\ts E_i)=q^{(\alpha_i,\alpha_i)}E_i\ts E_i,\ \ 
\s(F_i\ts w_{2\lambda})=w_{2\lambda}\ts F_i,$$
$$\s(w_{2\lambda}\ts F_i)=q^{-2(\lambda,\alpha_i)}F_i\ts w_{2\lambda},\ \ \s(F_i\ts F_i)=q^{-(\alpha_i,\alpha_i)}F_i\ts F_i$$
Then Lemma 14 in \cite{Ros98} can be applied and it gives
$$\ad(E_i)^n(w_{2\lambda})=\prod_{k=1}^n\left(\frac{q^{ka_{ii}}-1}{q^{a_{ii}}-1}\right)\prod_{k=0}^{n-1}(1-q^{k(\alpha_i,\alpha_i)}q^{2(\lambda,\alpha_i)})E_i^{\ts n}\ts w_{2\lambda},$$
$$\ad(F_i)^n(w_{2\lambda})=\prod_{k=1}^n\left(\frac{q^{ka_{ii}}-1}{q^{a_{ii}}-1}\right)\prod_{k=0}^{n-1}(1-q^{-k(\alpha_i,\alpha_i)}q^{-2(\lambda,\alpha_i)})E_i^{\ts n}\ts w_{2\lambda}.$$
As $\lambda\in-\mathcal{P}_+$, for any $i\in I$, $\lambda_i=\frac{2(\lambda,\alpha_i)}{(\alpha_i,\alpha_i)}\leq 0$ is an integer. This implies that 
$$\ad(E_i)^{-\lambda_i+1}(w_{2\lambda})=0,\ \ \ad(F_i)^{-\lambda_i+1}(w_{2\lambda})=0.$$
Moreover, the adjoint action of $K_i$ on $w_{2\lambda}$ makes it invariant. 
\par
According to Proposition 1.27 in \cite{APW91}, we obtain a $U_T(M)$-module surjection
$$\vp:L(-\lambda)\ts L(\lambda)^\sharp\ra R_{2\lambda}(1).$$
It is shown in \cite{JL94} and \cite{Cal93} that $\ad(U_q)(K_{2\lambda})\cong L(-\lambda)\ts L(-\lambda)^*$; since the $U_q(\g)$-module structure on $w_{2\lambda}$ coincides with the one on $K_{2\lambda}$, $R_{2\lambda}(1)$ is isomorphic to $L(-\lambda)\ts L(-\lambda)^\sharp$. Then $\vp$ is an isomorphism by comparing the dimension of both sides.
\item Let $L\subset R_\lambda(1)$ be a non-zero sub-$U_T(M)$-Yetter-Drinfel'd module. We show that $L$ contains $w_\lambda$, then $L=R_\lambda(1)$ after Lemma \ref{Lemma}.
\par
We take $l\in L$, according to Lemma \ref{Lemma}, $l=\ad(x)(w_\lambda)$ for some $x\in U_T(M)$. As $T$ acts diagonally on $R_\lambda(1)$, we may suppose that elements in $T$ do not appear in $x$. From the definition of the coproduct, the only component in $(\id\ts\Delta)\Delta(x)$ contained in $U_T(M)\ts\mc\ts U_T(M)$ is $x\ts 1\ts 1$. Then the compatibility condition of the module and comodule structures in a Yetter-Drinfel'd module ensures the existence of a component $u\ts w_\lambda$ in $\delta_L(l)=\delta_L(\ad(x)(w_\lambda))$ for some nonzero $u\in U_T(M)$. As $L$ is a left $U_T(M)$-comodule, $w_\lambda\in L$.
\end{enumerate}
\end{proof}
The proof of the point (1) in the theorem explains the reason of the appearance of the constant $2$ in the isomorphism.
\par
We explain its relation with the work of Baumann-Schmitt and A. Joseph cited above.
\par
We enlarge the torus part of the quantum group $U_q(\g)$ by its weight lattice, this gives a Hopf algebra $\wt{U_q}(\g)$. It acts on itself by adjoint action; we let $\mathcal{F}(\wt{U_q})$ denote the set of ad-finite elements in $\wt{U_q}(\g)$.
It is shown by Joseph-Letzter \cite{JL94} and Caldero \cite{Cal93} that there exists isomorphisms of $\wt{U_q}(\g)$-modules:
$$\ff(\wt{U_q})\cong \bigoplus_{\lambda\in\mathcal{P}_+}\End(L(\lambda))\cong \bigoplus_{\lambda\in\mathcal{P}_+}\ad(\wt{U_q})(K_{-2\lambda}).$$
\par
In the work of Baumann-Schmitt, they used the fact that $\ff(\wt{U_q})$ is a left coideal in $\wt{U_q}(\g)$. This is clear in our construction.
\par
In \cite{Jos99}, A. Joseph proved that as $\ad(\wt{U_q}(\g))(K_{-2\lambda})$ admits a $\mathcal{D}(\wt{U_q}(\g),\wt{U_q}(\g)^\circ)$-module structure and it is irreducible (Corollary 3.5, \textit{loc.cit}) by studying carefully an ad-invariant bilinear form on $\wt{U_q}(\g)$. Our construction provides another point of view and a simple proof of this result.

\section{Uniqueness of Hopf ideal}\label{Sec5}

The quantum group $U_H(M)$ is the quotient of the quantum quasi-symmetric algebra $Q_H(M)$ by the ideal generated by $\xi_i-K_i^2+1$ for $i\in I$. It is natural to ask that under which conditions on $P_i(x)\in\mc[x]$ for $i\in I$, the ideal $J$ generated by $\xi_i+P_i(K_i)$ for $i\in I$ is a Hopf ideal.
\par
\begin{proposition}
The ideal $J$ is a Hopf ideal if and only if for each $i\in I$, there exists $\lambda_i\in\mc$ such that $P_i=\lambda_i(K_i^2-1)$.
\end{proposition}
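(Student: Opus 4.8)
The plan is to compute $\Delta(\xi_i + P_i(K_i))$ in the Hopf algebra $Q_H(M)$ and see precisely which polynomials $P_i$ force this element to be skew-primitive with respect to a grouplike, since that is exactly the shape a generator of a Hopf ideal must have here. Recall that $K_i$ is grouplike, so $\Delta(P_i(K_i))$ is obtained by applying $P_i$ to $K_i \otimes K_i$; if $P_i(x) = \sum_k c_k x^k$ then $\Delta(P_i(K_i)) = \sum_k c_k K_i^k \otimes K_i^k$. On the other hand $\xi_i$ lives in degree one of $T_H^c(M)$ with left comodule structure $\delta_L(\xi_i) = K_i^2 \otimes \xi_i$, so from the description of the coproduct on the cotensor coalgebra (the $(0,1)$ and $(1,0)$ components being the bicomodule maps) one gets $\Delta(\xi_i) = K_i^2 \otimes \xi_i + \xi_i \otimes 1$, as already recorded in Section~\ref{Sec:construction}. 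Hence
$$\Delta(\xi_i + P_i(K_i)) = K_i^2 \otimes \xi_i + \xi_i \otimes 1 + \sum_k c_k K_i^k \otimes K_i^k.$$

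Next I would impose that $J = (\xi_i + P_i(K_i) : i \in I)$ be a coideal, i.e. $\Delta(J) \subseteq J \otimes Q_H(M) + Q_H(M) \otimes J$. Writing $y_i = \xi_i + P_i(K_i)$, one wants $\Delta(y_i)$ to be congruent mod $J\otimes Q_H(M) + Q_H(M)\otimes J$ to something expressible through the $y_j$'s. The natural guess, matching the $\xi_i = -1 + K_i^2$ case, is that $y_i$ should be skew-primitive: $\Delta(y_i) = K_i^2 \otimes y_i + y_i \otimes 1$. Expanding the right side gives $K_i^2 \otimes \xi_i + K_i^2 \otimes P_i(K_i) + \xi_i \otimes 1 + P_i(K_i) \otimes 1$, and comparing with the formula above forces
$$\sum_k c_k K_i^k \otimes K_i^k = K_i^2 \otimes P_i(K_i) + P_i(K_i) \otimes 1 = \sum_k c_k K_i^{k+2} \otimes K_i^k + \sum_k c_k K_i^k \otimes 1.$$
Since the monomials $K_i^a \otimes K_i^b$ are linearly independent in $H \otimes H$, matching coefficients yields a recursion on the $c_k$ that has $P_i(x) = \lambda_i(x^2 - 1)$ as its only solutions: the $\otimes K_i^0$ terms and the "diagonal" $\otimes K_i^k$ terms pin down all coefficients, forcing $c_0 = -c_2$, $c_k = 0$ for $k \notin\{0,2\}$. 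This gives the "only if" direction once one also checks that $\Delta(y_i)$ cannot lie in the coideal in any other, less obvious, way — which is where the linear independence of grouplikes together with the fact that $\xi_i \otimes 1$ and $K_i^2 \otimes \xi_i$ are the unique degree-one contributions does the work.

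For the converse, if $P_i = \lambda_i(K_i^2 - 1)$ then $y_i = \xi_i + \lambda_i(K_i^2 - 1)$ and the computation above shows directly $\Delta(y_i) = K_i^2 \otimes y_i + y_i \otimes 1$, so the ideal generated by the $y_i$ is a coideal; it is automatically stable under the antipode because $S$ of a $(K_i^2,1)$-skew-primitive is again a scalar multiple of a skew-primitive, and $\varepsilon(y_i) = 0$. Hence $J$ is a Hopf ideal. I expect the main obstacle to be the "only if" argument: one must argue that no cancellation between the $\xi_i$-terms coming from different $i$, or between $\xi_i$-terms and the grouplike terms, can rescue a bad $P_i$ — concretely, that the only way $\Delta(y_i)$ lands in $J \otimes Q_H(M) + Q_H(M) \otimes J$ is the skew-primitive one. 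This follows because, in the length/degree grading of $Q_H(M)$, $J$ is generated in the span of $\{\xi_j\}\oplus H$, the element $\Delta(y_i)$ has its only degree-$(0,1)$ part equal to $\xi_i \otimes 1$ and its only degree-$(1,0)$ part equal to $K_i^2 \otimes \xi_i$, and reducing modulo $J$ replaces each $\xi_j$ by $-P_j(K_j)$; requiring the result to vanish then gives exactly the monomial-matching identity above, and one invokes linear independence of the grouplikes $K_i^a$ in $H$ to conclude.
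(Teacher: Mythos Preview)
Your approach is essentially the paper's: both reduce the coideal condition on $y_i=\xi_i+P_i(K_i)$ to the identity $\Delta(P_i(K_i))=K_i^2\otimes P_i(K_i)+P_i(K_i)\otimes 1$ inside the group algebra $H$, and then determine $P_i$. One slip: in your displayed equation $K_i^2\otimes P_i(K_i)$ equals $\sum_k c_k\,K_i^2\otimes K_i^k$, not $\sum_k c_k\,K_i^{k+2}\otimes K_i^k$; with the correct expression your stated conclusion $c_k=0$ for $k\notin\{0,2\}$ and $c_0=-c_2$ is exactly what the coefficient comparison gives.

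Two points where the paper is tidier than your sketch. First, instead of matching monomials it isolates a general lemma: if $x\in\mc[G]$ satisfies $\Delta(x)=g\otimes x+x\otimes h$ with $g\neq h$ grouplike, then cocommutativity of $\mc[G]$ forces $(g-h)\otimes x=x\otimes(g-h)$ and hence $x=\lambda(g-h)$; this replaces your coefficient recursion in one line. Second, the paper makes explicit the step you leave informal, namely why the identity can be read in $H$ itself rather than merely in the quotient: it observes that the subalgebra of $Q_H(M)$ generated by the $\xi_i$ and $K_i^{\pm1}$ is the polynomial ring $\mc[\xi_1,\dots,\xi_n,K_1^{\pm1},\dots,K_n^{\pm1}]$, so the ideal generated by the $\xi_i+P_i(K_i)$ meets $H$ trivially and $\pi|_H$ is injective. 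Your final paragraph gestures at this via degree considerations, but stating $J\cap H=0$ directly is what actually licenses the use of linear independence of the $K_i^a$.
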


We start with the following general lemma:

\begin{lemma}
Let $\mc[G]$ be the group algebra of a group $G$ and $x=\sum\lambda_ig_i\in\mc[G]$ with $g_i\in G$ and $\lambda_i\in\mc$ such that $\Delta(x)=g\ts x+x\ts h$ for some $g\neq h\in G$. Then there exists $\lambda\in\mc$ such that $x=\lambda(g-h)$.
\end{lemma}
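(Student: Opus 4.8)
The statement to prove is the final Lemma: if $x=\sum\lambda_ig_i\in\mc[G]$ satisfies $\Delta(x)=g\ts x+x\ts h$ for distinct $g,h\in G$, then $x=\lambda(g-h)$ for some scalar $\lambda$. The plan is to exploit that the group elements $\{g\ts g'\}_{g,g'\in G}$ form a basis of $\mc[G]\ts\mc[G]$, so the equation $\Delta(x)=g\ts x+x\ts h$ is really a statement comparing coefficients on this basis. First I would write $x=\sum_{s\in G}\lambda_s\,s$ (finite support), compute $\Delta(x)=\sum_s\lambda_s\,s\ts s$, and expand the right-hand side as $\sum_s\lambda_s\,(g\ts s)+\sum_s\lambda_s\,(s\ts h)$. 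Then I compare coefficients of each basis element $u\ts v$.

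\textbf{Key steps.} The coefficient of $u\ts v$ on the left is $\lambda_u$ if $u=v$ and $0$ otherwise; on the right it is $\lambda_v\,[u=g]+\lambda_u\,[v=h]$. So for every pair $(u,v)$ with $u\neq v$ we get $\lambda_v\,[u=g]+\lambda_u\,[v=h]=0$, and for $u=v$ we get $\lambda_u=\lambda_u\,[u=g]+\lambda_u\,[u=h]$. From the diagonal relation: if $u\notin\{g,h\}$ then $\lambda_u=0$, so the support of $x$ is contained in $\{g,h\}$, i.e. $x=\lambda_g\,g+\lambda_h\,h$. It remains to show $\lambda_g=-\lambda_h$. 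For this, plug $u=g$, $v=h$ (which is an off-diagonal pair since $g\neq h$) into the off-diagonal relation $\lambda_v[u=g]+\lambda_u[v=h]=0$: this gives $\lambda_h+\lambda_g=0$. Hence $x=\lambda_g(g-h)$, and setting $\lambda=\lambda_g$ finishes the proof.

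\textbf{Main obstacle.} There is essentially no serious obstacle here; the only point requiring a little care is making the bookkeeping of coefficients clean, in particular handling the case distinction $u=v$ versus $u\neq v$ correctly and making sure the pair $(g,h)$ genuinely falls in the off-diagonal regime — which is exactly where the hypothesis $g\neq h$ is used, and is also what forces the nontrivial relation $\lambda_g+\lambda_h=0$ rather than leaving $\lambda_g,\lambda_h$ free. One should also note that the argument does not need $g,h$ to commute or $G$ to be abelian; it is purely a linear-independence argument in $\mc[G]\ts\mc[G]$.

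\textbf{Application to the Proposition.} Once the Lemma is in hand, the Proposition follows quickly. If $J$ generated by $\xi_i+P_i(K_i)$ is a Hopf ideal then $\Delta(\xi_i+P_i(K_i))$ must lie in $J\ts Q_H(M)+Q_H(M)\ts J$; using $\Delta(\xi_i)=K_i^2\ts\xi_i+\xi_i\ts 1$ and working modulo $J$ (so $\xi_i\equiv -P_i(K_i)$), one finds that $\Delta$ applied to the element $y_i:=P_i(K_i)\in\mc[K_i^{\pm1}]\subset\mc[G]$ must have the form $K_i^2\ts y_i+y_i\ts 1$; applying the Lemma with $g=K_i^2$, $h=1$ gives $P_i(K_i)=\lambda_i(K_i^2-1)$. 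Conversely, for such $P_i$ the computation $\Delta(\xi_i-\lambda_i(K_i^2-1)+\lambda_i(K_i^2-1)+\text{correction})$ — more precisely the displayed identity $\Delta(\xi_i-K_i^2+1)=K_i^2\ts(\xi_i-K_i^2+1)+(\xi_i-K_i^2+1)\ts 1$ scaled by $\lambda_i$ — shows $J$ is a Hopf ideal, as was already observed in Section \ref{Sec:construction}.
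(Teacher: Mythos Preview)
Your proof of the Lemma is correct, but it takes a different route from the paper's. You argue by direct coefficient comparison on the basis $\{u\ts v : u,v\in G\}$ of $\mc[G]\ts\mc[G]$, using only the explicit formula $\Delta(s)=s\ts s$; this is entirely elementary and makes the role of the hypothesis $g\neq h$ transparent (it is exactly what puts the pair $(g,h)$ in the off-diagonal regime, yielding $\lambda_g+\lambda_h=0$). The paper instead exploits cocommutativity: since $\mc[G]$ is cocommutative, the equation $\Delta(x)=g\ts x+x\ts h$ coincides with its flip $x\ts g+h\ts x$, giving $(g-h)\ts x=x\ts(g-h)$, and then a linear-dependence argument in the tensor product (since $g-h\neq 0$) forces $x$ to be a scalar multiple of $g-h$. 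Your argument is slightly more computational but completely self-contained; the paper's is a one-liner that isolates a structural reason (cocommutativity) and would work verbatim in any cocommutative coalgebra with two distinct grouplikes, though in practice both proofs are of the same length and difficulty.

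Your final paragraph on the application to the Proposition is essentially how the paper uses the Lemma as well; the only point to note is that the paper checks explicitly that the projection $\pi$ is injective on $H$ (so that the relation $\Delta(P_i(K_i))=K_i^2\ts P_i(K_i)+P_i(K_i)\ts 1$ really holds in $H$ and not just in a quotient), which you implicitly assume when ``working modulo $J$''.
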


\begin{proof}
We consider the sub-coalgebra of $\mc[G]$ generated by $g,h$ and $x$ which is cocommutative as $\mc[G]$ is. This forces $(g-h)\ts x=x\ts (g-h)$; as $g\neq h$, $x$ is proportional to $g-h$.
\end{proof}

\begin{proof}[Proof of Proposition]
The "if" part is clear. We suppose that $J$ is a Hopf ideal, then the canonical projection $\pi:Q_H(M)\ra Q_H(M)/J$ is a morphism of Hopf algebra. It is clear from definition that $\Delta(\xi_i)=K_i^2\ts \xi_i+\xi_i\ts 1$ and applying $\pi$ gives $\Delta(P_i(K_i))=K_i^2\ts P_i(K_i)+P_i(K_i)\ts 1$ in $\pi(H)$. Since the sub-Hopf algebra of $Q_H(M)$ generated by $\xi_i$ and $K_i^{\pm 1}$ for $i\in I$ is the polynomial algebra $\mc[\xi_1,\cdots,\xi_n,K_1^{\pm 1},\cdots,K_n^{\pm 1}]$, $\pi(H)$ coincides with $H$. The "only if" part holds according to the above lemma.
\end{proof}

In $Q_H(M)$, the commutator of $E_i$ and $F_j$ reads
$$E_i\ast F_j-q^{-a_{ij}}F_j\ast E_i=\delta_{ij}\frac{\xi_i}{q_i-q_i^{-1}}.$$
If the isomorphism constructed in Section \ref{Sec:QGrp} is under consideration, the specialization $\xi_i\mapsto P_i(K_i)$ gives 
$$[E_i,F_j]=\delta_{ij}\frac{P_i(K_i)K_i^{-1}}{q_i-q_i^{-1}}.$$
The proposition above affirms that if the quotient algebra is demanded to admit a Hopf algebra structure, we have no choice other than $P_i(K_i)=K_i^2-1$ up to scaling. It shows the rigidity on the deformation of commutation relations in the framework of Hopf algebras.

\end{document}